\documentclass[a4paper,10pt]{article}
\usepackage{multirow}
\usepackage{cite}
\usepackage{amsfonts}
\usepackage{hyperref}
\usepackage{amsmath}
\usepackage{lmodern}
\usepackage{bookmark}
\usepackage{abstract}
\usepackage{amssymb}
\usepackage[british]{babel}
\usepackage{amsthm}
\usepackage{graphicx}
\usepackage{subfigure}
\usepackage[all]{xy}
\usepackage{amsmath}
\usepackage{parskip}
\setlength{\parindent}{0cm}
\usepackage{tikz}
\usetikzlibrary{automata,arrows, matrix, positioning, decorations.markings}
\graphicspath{{chapter/}{figures/}}
\pgfdeclarelayer{back}
\pgfsetlayers{back,main} 
\tikzset{
  every matrix/.style = {
    matrix of nodes,
    text height = 1.5ex,
    text depth = 0.25ex,
  },
  column sep = 4pc,
  row sep = 3pc,
  > = angle 90,
  bend angle = 45,
  ampersand replacement = \#,
}
\tikzstyle{vecArrow} = [thick, decoration={markings,mark=at position
   1 with {\arrow[semithick]{open triangle 60}}},
   double distance=1.4pt, shorten >= 5.5pt,
   preaction = {decorate},
   postaction = {draw,line width=1.4pt, white,shorten >= 4.5pt}]
\tikzstyle{innerWhite} = [semithick, white,line width=1.4pt, shorten >= 4.5pt]
\setlength{\abovedisplayskip}{3pt}
\setlength{\belowdisplayskip}{3pt}
\makeatletter
\def\thm@space@setup{%
  \thm@preskip=\parskip \thm@postskip=0pt
}
\makeatother
\newtheorem{theorem}{Theorem}[section]
\newtheorem{lemma}[theorem]{Lemma}

\newtheorem{proposition}[theorem]{Proposition}
\newtheorem{corollary}[theorem]{Corollary}

\theoremstyle{definition}
\newtheorem{definition}[theorem]{Definition}

\theoremstyle{remark}
\newtheorem{remark}[theorem]{Remark}
\numberwithin{equation}{section}

\title{\vspace{-15mm}\fontsize{18pt}{10pt}\selectfont\textbf{Twisted $K$-homology, Geometric cycles and $T$-duality }} 
\author{
\large
\textsc{Bei Liu}\thanks{liu@uni-math.gwdg.de}\\[2mm] 
\normalsize Mathematisches Institut, Georg-August-Universit$\ddot{a}$ G$\ddot{o}$ettingen\\ 
\vspace{-5mm}
}

\begin{document}
\maketitle 

\begin{abstract}
Twisted $K$-homology corresponds to $D$-branes in string theory . In this paper we compare two different models of geometric twisted $K$-homology and get their equivalence. Moreover, we give another description of geometric twisted $K$-homology using bundle gerbes. In the last part we construct $T$-duality transformation for geometric twisted $K$-homology.
\end{abstract}

\section{Introduction}
String theory, as a candidate for quantum gravity, abstracts interests from both physicians and mathematicians. Its ultimate goal is to construct quantum theories of the basic structures of our universe. The starting point of string theory is that the fundamental units of our universe are 1-dimensional strings instead of point particles.String theorists find that there are five kinds of different string theories, i.e type I, type IIA, type IIB, heterotic $E_8$ and heterotic $SO(32)$ string theories, all of which are mathematically consistent (see \cite{String}). While there is only one universe, therefore it becomes important to study relations between different string theories. These relations are called duality. There are three kinds of duality in string theory: $S$-duality, $T$-duality and $U$-duality. $S$-duality is also called electric-magnetic duality \cite{KW}. $T$-duality, which is one of the main topic of this paper, is a duality exchanging winding number and momentum in the dynamic equation of $D$-branes. While $U$-duality can be seen as composition of $S$-duality and $T$-duality. $T$-duality provides an equivalence between type IIA and type IIB string theory. In physics, this equivalence is reflected by two important notions in string theory, $D$-branes and Ramond-Ramond fields living on $D$-branes. Briefly speaking, $D$-branes can be see as a submanifold of the spacetime manifold on which strings can end. A $Dp$-brane is a $p$-dimensional submanifold with some other structures on it.
\par
After Witten suggested that the Ramond-Ramond fields should be classified in (twisted) $K$-theory instead of de Rham cohomology in \cite{Witten}, twisted $K$-theory has been studied extensively (see \cite{Atiyah} and \cite{BCMMS}). There are mainly three different approaches to understand twisted $K$-theory:
\begin{enumerate}
\item bundle gerbes, which only works when twists are torsion cohomology classes
\item homotopy classes of sections of Fredlhom bundles
\item K-theory of the $C^{\ast}$-algebra of compact operator bundles determined by the twisting class
\end{enumerate}
The dual theory of twisted $K$-theory i.e twisted $K$-homology has also been studied as a mathematical interpretation of $D$-branes. The analytic approach to twisted $K$-homology is the same as the third for twisted $K$-theory we list above. However, geometric approaches are more useful for us to understand $D$-branes in string theory. In \cite{BCW} and \cite{BLW} they both give a construction of geometric twisted $K$-homology groups. We will show that they are equivalent to each other in this paper. After that, we will construct some properties of geometric twisted $K$-homology and then use the geometric twisted $K$-cycle in \cite{BLW} to construct the $T$-duality transformation for spacetime compactified over $S^1$.
\par
Now we give the structure of this paper. In the next section, we review different approaches to twisted $K$-homology. In section $3$ we show the equivalence of that two versions of geometric twisted $K$-homology. In section $4$ we establish some properties of geometric twisted $K$-homology. In section $5$ we discuss the charge map in \cite{BCW} and get a positive answer to the question on the charge map in the end of \cite{BCW}. In section $6$ we introduce another approach to define geometric twisted $K$-cycle using bundle gerbes.  In section $7$ we construct three transformations whose composition give $T$-duality transformation of geometric twisted $K$-homology. In the last section we will prove that the $T$-duality transformation is an isomorphism.

\textbf{Acknowledgement} The author thanks the Research Training Group 1493 "Mathematical Structures in Modern Quantum Physics" for the support during his Ph.D period. The author deeply thanks Thomas Schick for many helpful comments and discussions. Besides, the author also thanks Bailing Wang's wonderful lecture on geometric twisted $K$-homology and discussions.

\section{Reviews on twisted K-homology}
Let $X$ be a locally finite $CW$-complex and $\alpha: X\rightarrow K(\mathbb{Z}, 3)$ be a twist over $X$. Denote the projective unitary group over a separable Hilbert space $\mathbb{H}$ by $PU(\mathbb{H})$ and denote the $C^{\ast}$-algebra of compact operators by $\mathcal{K}$. Since the classifying space of $PU(\mathbb{H})$ is a model of $K(\mathbb{Z}, 3)$ (see \cite{Atiyah}), therefore $\alpha$ determines a principal $PU(\mathbb{H})$-bundle $\mathfrak{P}$ over $X$. We denote the associated $\mathcal{K}$-bundle of $\mathfrak{P}$ by $\mathcal{A}$. Then all of the continuous sections with compact support of $\mathcal{A}$ give rise to a $C^{\ast}$-algebra which we denote by $C^{\ast}(X, \alpha)$.  A direct way to construct a twisted $K$-homology theory for $(X, \alpha)$ is to use the $K$-homology of the continuous trace $C^{\ast}$-algebra of $C^{\ast}(X, \alpha)$. We denote this twisted $K$-homology group by $K^a_{\ast}(X, \alpha)$. However, it is very difficult to see the geometric meaning $K$-cycles through this approach. In \cite{BCW} and \cite{BLW}, more topological and geometric models are constructed.
Let $\mathfrak{K}$ be the complex $K$-theory spectrum and $\mathcal{P}_{\alpha}(\mathfrak{K})$the corresponding bundle of based spectra over $X$. In \cite{BLW}, the topological twisted $K$-homology group is defined to be
\begin{equation}\label{topological 1}
K^t_n(X, \alpha):= \lim_{k\rightarrow \infty}[S^{n+2k}, \mathcal{P}_{\alpha}(\Omega^{2k}\mathfrak{K})/X]
\end{equation}
This definition comes from the classical definition of homology theory by spectra, which is automatically a homology theory. In \cite{BLW}, B.L.\ Wang gave a geometric twisted $K$-homology. Before giving his constructions, we first review the definition of geometric cycles of $K$-homology in \cite{BHS}.
A geometry cycle on a pair of space $(X,Y)(Y \subset X)$ is a triple $(M,f,[E])$, such that\\
\begin{itemize}
\item $M$ is a compact spin$^c$-manifold(probably with boundary);
\item $f$ is a continuous map from $M$ to $X$ such that $f(\partial M)\subset Y$;
\item $[E]$ is a $K^0$-class of $M$
\end{itemize}
The definition of twisted cases is given in \cite{BLW} as follows.
\begin{definition}
A geometric cycle for $(X,Y,\alpha)$ is a quintuple $(M,\iota,\upsilon,\eta, [E])$ such that
\begin{itemize}
\item $M$ is a $\alpha$-twisted spin$^c$-manifold,i.e $M$ is a compact oriented manifold and the following diagram exists
\begin{equation}
\begin{tikzpicture}
               [
                    execute at begin node = \(,
                    execute at end node = \),
                    inner sep = .3333em,
                  ]
                  \matrix (m) {
                   |(i)|M \#  \#  |(l)| \textbf{BSO}  \\
                   |(j)|X \#  \#  |(k)| \ K(\mathbb{Z},3)  \\
                  };
                  \path[->]
                  (i)edge node[auto]{\iota} (j)
                  (i)edge node[auto]{\upsilon}(l)
                  (j)edge node[auto]{\alpha}(k)
                  (l)edge node[auto]{W_3}(k)
                  (l)edge node[auto]{\eta}(j);
                 \draw[vecArrow] (l) to (j);
                 \draw[innerWhite](l) to (j);
              \end{tikzpicture}
\end{equation}
Here $\upsilon$ and $W_3$ are classifying maps of the stable normal bundle of $M$ and the third integral Stiefel-Whitney class respectively, $\eta$ is a homotopy between $\alpha \circ \iota$ and $W_3 \circ \upsilon$. Moreover we require $\iota(\partial M)\subset Y$.
\item $[E]$ is an element class of $K^0(X)$ which is represented by a $\mathbb{Z}_2$-graded vector bundle $E$.
\end{itemize}
\end{definition}
Let $\Gamma(X, \alpha)$ be the collections of all geometric cycles for $(X, \alpha)$. To get geometric twisted $K$-homology, we still need an equivalence relation on $\Gamma(X, \alpha)$, which is generated by the following basic relations:
\begin{itemize}
  \item{\textbf{Direct sum - disjoint union}}
  If $(M,\iota,\upsilon,\eta, [E_1])$ and $(M,\iota,\upsilon,\eta, [E_2])$ are geometric cycles over $(X, \alpha)$, then
  \begin{equation}
  (M,\iota,\upsilon,\eta, [E_1])\cup (M,\iota,\upsilon,\eta, [E_2])\sim (M,\iota,\upsilon,\eta, [E_1]+[E_2])
  \end{equation}
  \item{\textbf{Bordism}}
  Given two geometric cycle $(M,\iota,\upsilon,\eta, [E_1])$ and $(M,\iota,\upsilon,\eta, [E_2])$ , if there exists a $\alpha$-twisted spin$^c$-manifold $(W, \iota, \upsilon, \eta)$ and $[E]\in K^0(W)$ such that
  \begin{equation}
  \delta(W, \iota, \upsilon, \eta)=-(M_1, \iota_1, \upsilon_1, \eta_1)\cup (M_2, \iota_2, \upsilon_2, \eta_2)
  \end{equation}
  and $\delta([E])= [E_1]\cup [E_2]$. Here $-(M_1, \iota_1, \upsilon_1, \eta_1)$ means the manifold $M_1$ with the opposite $\alpha$-twisted $Spin^c$ structure.
 \item{Spin$^c$ \textbf{vector bundle modification}}
  Given a geometric cycle $(M,\iota,\upsilon,\eta, E)$ and a spin$^c$ vector bundle $V$ over $M$ with even dimensional fibers, we can choose a Riemannian metric on $V\oplus \mathbb{R}$ and get the sphere bundle $\hat{M}= S(V\oplus \mathbb{R})$. Then the vertical tangent bundle $T^v(\hat{M})$ admits a natural spin$^c$ structure. Let $S^+_V$ be the associated positive spinor bundle and $\rho: \hat{M}\rightarrow M$ be the projection. Then
  \begin{equation}
  (M,\iota,\upsilon,\eta, E)\sim (\hat{M}, \iota\circ \rho, \upsilon\circ \rho, \eta\circ \rho, \rho^{\ast}E\otimes S^+_V).
  \end{equation}
  Here $\upsilon'$ is a classifying map of the stable normal bundle of $\hat{M}$ and $\eta'$ is a chosen homotopy between $W_3\circ \upsilon'$ and $\alpha\circ \iota\circ \rho$.
\end{itemize}
\begin{definition}\label{geometric 1}
$K^{g}_{\ast}:= \Gamma(X, \alpha)/ \sim$. Addition is given by disjoint union relation above. Let $K^{g}_0(X, \alpha)$(respectively $K^{g}_1(X, \alpha)$) be the subgroup of $K^{g}_{\ast}(X, \alpha)$ determined by all geometric cycles with even (respectively odd) dimensional $\alpha$-twisted spin$^c$-manifolds.
\end{definition}
There is an natural isomorphism $\mu$ between $K^{g}_{0/1}(X, \alpha)$ and $K^{a}_{0/1}(X, \alpha)$:
\begin{equation}
\mu(M, \iota, \upsilon, \eta, [E])= \iota_{\ast}\circ \eta_{\ast}\circ I^{\ast}\circ PD([E])
\end{equation}
Here $PD: K^i(M)\rightarrow K_{n+i}(M, W_3\circ \tau)$ is the Poincar$\check{e}$ duality map between $K$-group and $K$-homology group, $\iota_{\ast}$ is the push-forward map induced by $\iota$, $\eta_{\ast}$ is the canonical isomorphism induced by $\eta$
\begin{equation}
\eta_{\ast}: K^a_{0/1}(M, W_3\circ \upsilon)\rightarrow K^a_{1/0}(M, \alpha\circ \iota)
\end{equation}
 and $I: K^a_{0/1}(M, W_3\circ \tau)\rightarrow K^a_{0/1}(M, W_3\circ \upsilon)$ is a natural isomorphism induced by the anti-isomorphism of the associated $\mathcal{K}(\mathbb{H})$-bundles. The following theorem in \cite{BLW} states that $\mu$ is an isomorphism.
\begin{theorem}\label{Analytic index }
The assignment $(M, \iota, \upsilon, \eta, [E])\rightarrow \mu(M, \iota, \upsilon, \eta, [E])$, called the assembly map, defines a natural homomorphism
\begin{equation*}
\mu: K^g_{0/1}(X, \alpha)\rightarrow K^a_{0/1}(X, \alpha)
\end{equation*}
which is an isomorphism for any smooth manifold $X$ with a twisting $\alpha: X\rightarrow K(\mathbb{Z}, 3)$.
\end{theorem}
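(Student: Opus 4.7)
The plan is to establish the isomorphism by combining well-definedness of $\mu$ with a comparison-of-homology-theories argument. First I would verify that $\mu$ descends to the quotient $\Gamma(X,\alpha)/\!\sim$ by checking invariance under each of the three basic relations. Additivity under the direct sum--disjoint union relation is formal. Bordism invariance follows because $\iota_{\ast}\circ\eta_{\ast}\circ I^{\ast}\circ PD$ is essentially a twisted index: when $(W,\iota,\upsilon,\eta)$ is a bordism with $\delta[E]=[E_1]\cup[E_2]$, the pushforward factors through the boundary operator in relative twisted $K$-homology and therefore vanishes on the image of $\delta$. Invariance under spin$^c$ vector bundle modification reduces to the Thom isomorphism for $K$-theory of a spin$^c$ bundle applied to $V$, together with naturality of Poincar\'e duality with respect to the sphere bundle projection $\rho:\hat{M}\to M$.

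Next I would verify that $\mu$ is a natural transformation of $\mathbb{Z}/2$-graded functors on the category of pairs $(X,\alpha)$ with twist-preserving maps, and that it is a group homomorphism. On cycles both functors carry a pushforward along such maps, and compatibility with $\mu$ is immediate from functoriality of $PD$, $I^{\ast}$ and $\eta_{\ast}$.

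The decisive step is comparison of the two theories. Here I would follow the Baum--Douglas template in the twisted setting: show that $K^g_{\ast}(-,\alpha)$ satisfies Mayer--Vietoris for an open cover $X=U\cup V$ with respect to the restricted twists $\alpha|_U,\alpha|_V$, by producing a geometric boundary map at the level of cycles via transverse intersection with $U\cap V$. Since $K^a_{\ast}$ is known to satisfy Mayer--Vietoris for twisted $K$-homology and $\mu$ is compatible with boundary maps, a five-lemma induction over a good cover of the manifold $X$ reduces the claim to the case where $X$ is contractible and $\alpha$ is trivial, which is the classical Baum--Douglas--Higson--Schick theorem from \cite{BHS}.

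The main obstacle I anticipate is the geometric Mayer--Vietoris (and the attendant excision statement) in the twisted setting: one must cut and paste geometric cycles $(M,\iota,\upsilon,\eta,[E])$ along $\iota^{-1}(U\cap V)$ in a way that preserves not only the spin$^c$ structure of $M$ but also the homotopy datum $\eta$ between $\alpha\circ\iota$ and $W_3\circ\upsilon$. Handling $\eta$ under surgery and bordism is the delicate point, since it has no analogue in the untwisted theory; I expect this is exactly where the hypothesis that $X$ is a smooth manifold is used, so that $\alpha$ can be represented by a geometric object (a $PU(\mathbb{H})$-bundle or bundle gerbe) whose restriction and gluing behave well and make $\eta$ an honest geometric datum that can be cut and reassembled.
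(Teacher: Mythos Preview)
The paper does not actually prove this theorem: it is quoted verbatim from \cite{BLW} (Wang's paper), with no argument given in the present paper. So there is no ``paper's own proof'' to compare against; your proposal is an outline of a proof where the paper simply cites one.

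That said, your outline is the natural comparison-of-homology-theories strategy, and its well-definedness and naturality steps are unproblematic. The substantive issue lies exactly where you flag it: establishing a Mayer--Vietoris sequence for $K^g_\ast(-,\alpha)$ in the twisted setting. Note that this very paper, in Section~4, carries out that program (Theorems~\ref{Excision}, \ref{six term exact sequence}, \ref{MV sequence}) but only under the additional hypothesis that the twist $\alpha$ is \emph{representable}, i.e.\ $[\alpha]=W_3(V)$ for some oriented real vector bundle $V$ on $X$. The key Lemma~\ref{extension lemma}, which lets one extend a spin$^c$ vector bundle from $\partial M$ to $M$ after stabilising, uses this representability in an essential way, and the paper explicitly leaves the general case open. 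So your strategy, as written and using only the tools in this paper, would yield the isomorphism only for representable twists, not for arbitrary $\alpha:X\to K(\mathbb Z,3)$ as stated in the theorem. To get the full statement you would have to either supply a Mayer--Vietoris argument for $K^g$ that avoids representability (which is precisely the open problem noted after Theorem~\ref{six term exact sequence}), or follow a different route such as the one in \cite{BLW}, which passes through the topological model $K^t_\ast$ rather than through a direct MV comparison of $K^g$ and $K^a$.
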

Before giving the definitions, we need to point out that there is another description of a twist over a space here. Let $X$ be a second countable locally compact Hausdorff topological space. Then a twist on $X$ in \cite{BEM} is a locally trivial bundle $\mathcal{A}$ of elementary $C^{\ast}$-algebras on $X$, i.e each fiber of $\mathcal{A}$ is an elementary $C^{\ast}$-algebra and with the structure group the automorphism group of $\mathcal{K}(\mathbb{H})$ for some Hilbert space $\mathbb{H}$. However, these two description can be transferred from one to the other.
In \cite{BCW} they used ($X$, $\mathcal{A}$) as the starting point and defined topological twisted $K$-cycles  as follows.
\begin{definition}
An $\mathcal{A}$-twisted $K$-cycles on $X$ is a triple $(M, \sigma, \psi)$ where
\begin{itemize}
  \item $M$ is a compact spin$^c$-manifold without boundary.
  \item $\phi: M\rightarrow X$ is a continuous map.
  \item $\sigma\in K_0(\Gamma(M, \phi^{\ast}\mathcal{A}^{op}))$
\end{itemize}
\end{definition}
Let $E(X, \alpha)$ be all of the $K$-cycles over $(X, \mathcal{A})$. Then the topological $\mathcal{A}$-twisted $K$-homology group over $(X, \mathcal{A})$ is defined by
\begin{equation}
K^{top}_{\ast}(X, \alpha):= E(X, \alpha)/\sim
\end{equation}
Here $\sim$ is a similar equivalence generated by disjoint unions, bordisom and vector bundle modification.
Moreover, a more geometric twisted $K$-cycle is given in \cite{BCW}, which is called $D$-cycle. In order to introduce $D$-cycle, we still also need the notion of spinor bundles.
\begin{definition}
A spinor bundle for a twisting $\mathcal{A}$ is a vector bundle $S$ of Hilbert spaces on $X$ together with a given isomorphism of twisting data
\begin{equation}
\mathcal{A}\cong \mathcal{K}(S)
\end{equation}
\end{definition}
One fact we need to know about spinor bundle is that a spinor bundle for $\mathcal{A}$ exists if and only if $DD(\mathcal{A})= 0$.
Now we can give the definition of $D$-cycle in \cite{BCW}.
\begin{definition}\label{geometric 2}
A $D$-cycle for $(X, \mathcal{A})$ is a $4$-tuple $(M, E, \phi, S)$ such that
\begin{itemize}
  \item $M$ is a closed oriented $C^{\infty}$-Riemannian manifold.
  \item $E$ is a complex vector bundle on $M$.
  \item $\phi$ is a continuous map from $M$ to $X$.
  \item $S$ is a spinor bundle for Cliff$^+_{\mathbb{C}}(TM)\otimes \phi^{\ast}\mathcal{A}^{op}$.
\end{itemize}
Two $D$-cycles $(M, E, \phi, S), (M', E', \phi', S')$ for $(X, \mathcal{A})$ are isomorphic if there is an orientation preserving isometric diffeomorphism $f: M\rightarrow M$ such that the diagram
\begin{equation}\label{isomorphism 2}
\begin{tikzpicture}
               [
                    execute at begin node = \(,
                    execute at end node = \),
                    inner sep = .3333em,
                  ]
                  \matrix (m) {
                   |(i)|M \#  \#  |(l)| M'  \\
                    \#  |(k)| \ X \# \\
                  };
                  \path[->]
                  (i)edge node[auto]{f} (l)
                  (i)edge node[below]{\phi}(k)
                  (l)edge node[below]{\phi'}(k);
\end{tikzpicture}
\end{equation}
commutes, and $f^{\ast}E'\cong E, f^{\ast}S'\cong S$.
\end{definition}
Let $\Gamma^D(X, \alpha)$ be the collection of all $D$-cycles over $(X, \mathcal{A})$. Similar, we can impose an equivalence generated by disjoint union, bordism and vector bundle modification on $\Gamma^D(X, \alpha)$ and get the geometric $\mathcal{A}$-twisted $K$-homology in \cite{BCW}, which we denote by $K^{geo}_{\ast}(X, \mathcal{A})$.
\begin{itemize}
  \item{\textbf{Direct sum - disjoint union}}
  If $(M, E_1, \iota, S)$ and $(M, E_2, \iota, S)$ are $D$-cycles for $(X, \alpha)$ then
  \begin{equation}
  (M, E_1, \iota, S)\cup (M, E_2, \iota, S)\sim (M, E_1\oplus E_2, \iota, S)
  \end{equation}
  \item{\textbf{Bordism}}
  Given two $D$ cycle $(M_0, E_0, \iota_0, S_0)$ and $(M_1, E_1, \iota_1, S_1)$ , if there exists a $4$-tuple $(W, E, \Phi, S)$ such that $W$ is a compact oriented Riemannian manifold with boundary, $E$ is a complex vector bundle over $W$ , $\Phi$ is a continuous map from $W$ to $X$ and
  \begin{equation}
  (\delta W, E|\delta W, \Phi|\delta W, S^+|\delta W)\cong (M_0, E_0, \iota_0, S_0)\cup (M_1, E_1, \iota_1, S_1)
  \end{equation}
  Here $S^+|\delta$ is the positive part of $S$. It is $S$ itself when $W$ is odd dimensional.
  \item{Spin$^c$-\textbf{vector bundle modification}}
  Given a $D$ cycle $(M, E, \iota, S)$ and a spin$^c$-vector bundle $V$ over $M$ with even dimensional fibers. Let $S'$ be the spinor bundle of Cliff$^+_{\mathbb{C}}(T\hat{M})\otimes (\phi\circ \rho)^{\ast}(\mathcal{A})^{op}$. Use the notation in the definition of geometric $K$-cycles. Then
  \begin{equation}
  (M, E, \iota, S)\sim (\hat{M}, \beta\otimes \rho^{\ast}E, \iota\circ \rho, \rho^{\ast}S ).
  \end{equation}
  Here $\beta$ is a complex vector bundle over $\hat{M}$ whose restriction to each fiber of $\rho$ is the Bott generator vector bundle of $\hat{M}$.  
\end{itemize}
In \cite{BCW} they gave a natural charge map $h: K^{geo}_{\ast}(X, \mathcal{A})\rightarrow K^{top}_{\ast}(X, \mathcal{A})$ as follows:
Let $(M, E, \phi, S)$ be a $D$-cycle and choose a normal bundle for $M$ with even dimensional fibers, then
\begin{equation}\label{charge map}
h(M, E, \phi, S):= (S(v\oplus \underline{\mathbb{R})}, \phi\circ \pi, \sigma)
\end{equation}
Here $\sigma$ is defined as the image of $E$ under the composition of $s_!$ and $\chi$.
 \begin{enumerate}
   \item Let $s: M\rightarrow S(\upsilon \oplus \underline{\mathbb{R}})$ be the canonical section of unite section on the trivial real line bundle. For simplicity, we denote the total space of sphere bundle $S(\upsilon \oplus \underline{\mathbb{R}})$ by $M$ and the bundle map by $\rho$. Then $s_!: K^0(M)\rightarrow K_0(\Gamma(\hat{M}, \rho^{\ast}(Cliff_{\mathbb{C}}(\upsilon))))$ is the Gysin homomorphism induced by $s$.
   \item $\chi: K_0(\Gamma(\hat{M}, \rho^{\ast}(Cliff_{\mathbb{C}}(\upsilon))))\rightarrow K_0(\Gamma(\hat{M}, (\phi\circ \rho)^{\ast}\mathcal{A}^{op}))$ is the isomorphism induced by the trivialisation of $TM\oplus \upsilon$ and the given spinor bundle $S$.
 \end{enumerate}
 They propose a question which asked that if $h$ is an isomorphism. We will prove in the paper that it is true when $X$ is a countable $CW$-complex.
\begin{remark}\label{FW}
We should note that all of these definitions of geometric twisted $K$-homology group cannot work for general twists. The twists should satisfy the Freed-Witten condition (see \cite{BLW}), i.e, there exists a manifold $M$ and a continuous map $l: M\rightarrow X$ s.t
\begin{equation}
l^{\ast}(H)+W_3(M)=0
\end{equation}
\end{remark}
\section{Equivalence between two versions of geometric twisted K-homology}
We list the following theorem from \cite{Plymen} first.
\begin{theorem}
Let $E$ be a vector bundle over a space $X$ and $\tilde{E}$ be the Clifford bundle of $E$. Let $W_3(E)$ be the third integral Stiefel-Whitney class of $E$. Then we have:
\begin{eqnarray}
W_3(E) =
\begin{cases}
DD(\tilde{E})   & if\ $E$\ has\ even\ dimension \\
DD(\tilde{E}^+)         & if\ $E$\ has\ odd\ dimension.
\end{cases}
\end{eqnarray}
\end{theorem}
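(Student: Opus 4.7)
The strategy is to view both sides of the claimed equality as obstructions to lifting a natural $U(1)$-central extension of structure groups, and then to exhibit a map of central extensions (the spin representation) that identifies them.

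First I would recall the two relevant obstruction-theoretic interpretations. The class $W_3(E) \in H^3(X, \mathbb{Z})$ is, by definition, the obstruction to lifting the classifying map $X \to BSO(n)$ of $E$ through the fibration $B\mathrm{Spin}^c(n) \to BSO(n)$, whose fibre is $K(\mathbb{Z},2)=BU(1)$ because $\mathrm{Spin}^c(n) \to SO(n)$ is a $U(1)$-central extension. On the other hand, for a locally trivial bundle $\mathcal{B}$ of simple matrix algebras $M_{2^k}(\mathbb{C})$ on $X$, the Dixmier-Douady class $DD(\mathcal{B}) \in H^3(X,\mathbb{Z})$ is the obstruction to lifting the $PU(2^k)$-cocycle of $\mathcal{B}$ through the analogous $U(1)$-central extension $U(2^k) \to PU(2^k)$. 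Both obstructions arise as connecting homomorphisms from $H^2(X,\underline{U(1)})$ in the exact sequences induced by these central extensions.

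Second I would split by the parity of $n$ and identify $\tilde E$ (resp.\ $\tilde E^+$) as a bundle of full matrix algebras. When $n$ is even, $Cl^{\mathbb{C}}_n \cong M_{2^{n/2}}(\mathbb{C})$ as a central simple algebra; when $n$ is odd, $Cl^{\mathbb{C}}_n$ is not simple, but its even part satisfies $Cl^{+,\mathbb{C}}_n \cong M_{2^{(n-1)/2}}(\mathbb{C})$. In both cases $SO(n)$ acts on the relevant algebra by inner automorphisms, so the associated bundle has structure group $PU(2^k)$ with $k = \lfloor n/2 \rfloor$, and it is precisely the bundle on which $DD$ is computed.

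Third, and this is the heart of the argument, I would use the complex spin representation to build a commuting square of groups
\begin{equation*}
\begin{array}{ccc}
\mathrm{Spin}^c(n) & \longrightarrow & U(2^k) \\
\downarrow & & \downarrow \\
SO(n) & \longrightarrow & PU(2^k)
\end{array}
\end{equation*}
in which the horizontal arrows are the spin/adjoint-spin maps and the vertical arrows are the two $U(1)$-central extensions. The crucial point is that this square restricts to the identity on the two $U(1)$-kernels, so it is a morphism of central extensions. Applying $B(-)$ and taking the induced connecting maps to $K(\mathbb{Z},3)$ yields a commutative triangle that identifies the pullback of $DD$ along $BSO(n) \to BPU(2^k)$ with $W_3$. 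Pulling back along the classifying map of $E$ gives $DD(\tilde E) = W_3(E)$ in the even case and $DD(\tilde E^+) = W_3(E)$ in the odd case.

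The main obstacle I expect is the identification of the $U(1)$-kernels in the spin square, since one must take care that the $U(1)$ sitting inside $\mathrm{Spin}^c(n) = \mathrm{Spin}(n) \times_{\mathbb{Z}/2} U(1)$ is sent \emph{isomorphically} (and not by a squaring map) to the scalar $U(1)$ in $U(2^k)$ under the extended spin representation; otherwise the two obstruction classes would differ by a factor of two. A careful unwinding of the definition of the $\mathrm{Spin}^c$ extension of the spin representation handles this, and the parity split ensures the correct target $M_{2^k}(\mathbb{C})$ in each case.
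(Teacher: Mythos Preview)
The paper does not give its own proof of this statement: it is quoted verbatim from Plymen \cite{Plymen} and used as input to the subsequent argument. So there is nothing to compare your proposal against within the paper itself.

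That said, your outline is essentially the standard obstruction-theoretic argument behind Plymen's result and is correct in spirit. The identification of $W_3(E)$ with the obstruction to a $\mathrm{Spin}^c$-lift, the identification of $DD$ with the obstruction to a $U(2^k)$-lift of the $PU(2^k)$-cocycle of the Clifford (resp.\ even Clifford) bundle, and the commuting square of $U(1)$-central extensions furnished by the complex spin representation are exactly the ingredients one needs. Your caution about the $U(1)$-kernels is well placed and your resolution is right: under the $\mathrm{Spin}^c$-extension of the spin representation an element $[(1,z)]$ acts as the scalar $z$, so the induced map on central $U(1)$'s is the identity and no factor of two appears. The parity split is handled correctly: for $n$ even $Cl_n^{\mathbb{C}}\cong M_{2^{n/2}}(\mathbb{C})$ is already simple, while for $n$ odd one passes to $Cl_n^{+,\mathbb{C}}\cong Cl_{n-1}^{\mathbb{C}}\cong M_{2^{(n-1)/2}}(\mathbb{C})$, and in both cases the $SO(n)$-action by algebra automorphisms lands in $PU(2^k)$ because every automorphism of a full matrix algebra is inner. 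One small point you leave implicit is why the associated $PU(2^k)$-bundle really is the Clifford bundle in question; this is just the observation that $\tilde E$ (resp.\ $\tilde E^+$) is the bundle associated to the orthonormal frame bundle via the natural $SO(n)$-action on $Cl_n^{\mathbb{C}}$ (resp.\ its even part), which is precisely the bottom arrow of your square.
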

\begin{remark}
Through the above theorem we can get a bit of idea why the two versions of geometric cycle are equivalent. The existence of spinor bundle implies the triviality of Dixmier-Douady class of $DD(Cliff^+(TM)\otimes \phi^{\ast}(\mathcal{A}^{op})$, so the choice of the spinor bundle $S$ for $Cliff^+(TM)\otimes \phi^{\ast}\mathcal{A}^{op}$ in the definition of $D$-cycle corresponds to the choice of the homotopy $\eta$ in the definition of geometric $K$-cycle in \cite{BLW}.
\end{remark}
\par
\noindent Let $X$ be a locally finite $CW$-complex and $\alpha: X\rightarrow K(\mathbb{Z}, 3)$ be a twist in the sense of \cite{BLW}. From the discussion in the beginning of section $2$ we know that $\alpha$ determines an associated $\mathcal{K}$-bundle $\mathcal{A}$ over $X$. While $\mathcal{A}$ is exactly a twist in the sense of \cite{BCW}. Therefore, we get the basic set up data of the two definitions of geometric twisted $K$-cycles are equivalent.
\par
\noindent
Moreover, we can consider twisting datas in different categories. Define $Twist_1(X)$ to be a category with continuous maps $\alpha: X\rightarrow K(\mathbb{Z}, 3)$ as objects. The morphisms of $Twist_1(X)$ are homotopies between objects. Define $Twist_2(X)$ to be a category with $\mathcal{H}$-bundles over $X$ as objects. The morphisms of $Twist_2(X)$ are isomorphisms of $\mathcal{K}$-bundles. Obviously, we can see that $Twist_1(X)$ and $Twist_2(X)$ are groupoids and the sets of equivalence classes for them are both   $H^3(X, \mathbb{Z})$.
Now we give two lemmas which is used later, whose proof are probably already known and so we don't claim that they are original.
\par
\begin{lemma}\label{K bundle}
Denote the projection from $X\times I$ to $X$ by $p$. For every $\mathcal{K}$-bundle $\mathcal{A}$ over $X\times I$, there exists a $\mathcal{K}$-bundle $\mathcal{A}'$ over $X$ such that $\mathcal{A}\cong p^{\ast}(\mathcal{A}')$.
\end{lemma}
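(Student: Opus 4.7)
The plan is to exploit the classical homotopy invariance of principal bundles. Recall that, over a paracompact space $Y$, $\mathcal{K}$-bundles (with structure group $\mathrm{Aut}(\mathcal{K}(\mathbb{H}))$) are in bijection with principal $PU(\mathbb{H})$-bundles, via the conjugation action of $PU(\mathbb{H})$ on $\mathcal{K}(\mathbb{H})$. Such principal bundles are in turn classified by homotopy classes of maps $Y\to BPU(\mathbb{H})\simeq K(\mathbb{Z},3)$, as noted at the beginning of section 2. Thus it suffices to show that pullback of $\mathcal{K}$-bundles is homotopy invariant, which reduces to the covering homotopy property for principal $PU(\mathbb{H})$-bundles over the CW-complex $X\times I$.

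Given this, the construction is essentially formal. Let $i_0: X\to X\times I$ be the inclusion $x\mapsto(x,0)$, and define
$$\mathcal{A}':= i_0^{\ast}\mathcal{A}.$$
Then
$$p^{\ast}\mathcal{A}' \;=\; p^{\ast}i_0^{\ast}\mathcal{A} \;=\; (i_0\circ p)^{\ast}\mathcal{A},$$
and the self-map $i_0\circ p: X\times I\to X\times I$, $(x,t)\mapsto (x,0)$, is homotopic to $\mathrm{id}_{X\times I}$ via the straight-line homotopy $H_s(x,t)=(x,(1-s)t)$. If $f: X\times I\to K(\mathbb{Z},3)$ is a classifying map for $\mathcal{A}$, then $f\circ(i_0\circ p)\simeq f$, so by homotopy invariance the $\mathcal{K}$-bundles they classify are isomorphic. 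Hence $p^{\ast}\mathcal{A}'\cong \mathcal{A}$.

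The only point that requires any care is checking that homotopic classifying maps really do yield isomorphic $\mathcal{K}$-bundles in our setting. This is the standard covering-homotopy theorem for principal $G$-bundles applied to the topological group $G=PU(\mathbb{H})$ over the CW-pair $(X\times I, X\times\{0\})$; the passage to the associated $\mathcal{K}$-bundle via the continuous homomorphism $PU(\mathbb{H})\to \mathrm{Aut}(\mathcal{K}(\mathbb{H}))$ is compatible with pullbacks, so no additional issue arises. Since $X$ is a locally finite CW-complex, $X\times I$ is itself a CW-complex (in particular paracompact), so the hypotheses of the classification theorem are satisfied and the argument goes through.
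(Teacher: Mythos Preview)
Your argument is correct and is precisely the standard homotopy-invariance argument for bundles that the paper has in mind; indeed, the paper itself simply says ``The proof is obvious'' and gives no further details.
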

The proof is obvious.
\begin{lemma}\label{canonical spinor bundle}
Let $\mathcal{A}$ be a $\mathcal{K}$-bundle over $X$, then there is a canonical spinor bundle for $\mathcal{A}\otimes \mathcal{A}^{op}$.
\end{lemma}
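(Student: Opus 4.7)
The plan is to construct the canonical spinor bundle fibrewise as a Hilbert-Schmidt construction and show that the relevant structure group $PU(\mathbb{H})$ acts canonically on the total space, so that the fibrewise construction globalises without making any choice.

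First, I would observe the algebraic fact at the level of a single fibre: if $\mathbb{H}$ is a separable Hilbert space and $\bar{\mathbb{H}}$ denotes its conjugate, then the Hilbert-Schmidt space $HS(\mathbb{H})\cong \mathbb{H}\otimes\bar{\mathbb{H}}$ carries a natural left $\mathcal{K}(\mathbb{H})\otimes\mathcal{K}(\mathbb{H})^{op}$-module structure via $(a\otimes b^{op})\cdot T = aTb$, and this action induces a canonical isomorphism $\mathcal{K}(\mathbb{H})\otimes\mathcal{K}(\mathbb{H})^{op}\cong \mathcal{K}(HS(\mathbb{H}))$. This is the fibrewise model for the spinor bundle.

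Next, I would globalise this via the associated bundle construction. Let $\mathfrak{P}$ be the principal $PU(\mathbb{H})$-bundle such that $\mathcal{A}\cong \mathfrak{P}\times_{PU(\mathbb{H})}\mathcal{K}(\mathbb{H})$, where $PU(\mathbb{H})$ acts on $\mathcal{K}(\mathbb{H})$ by conjugation. The crucial point is that although $PU(\mathbb{H})$ does not act on $\mathbb{H}$, the conjugation action $T\mapsto uTu^{-1}$ on $HS(\mathbb{H})$ factors through $PU(\mathbb{H})$ because scalars cancel. Hence we can form
\begin{equation*}
S_{\mathcal{A}} := \mathfrak{P}\times_{PU(\mathbb{H})} HS(\mathbb{H}),
\end{equation*}
which is a Hilbert-space bundle on $X$ constructed without any choice. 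The natural fibrewise isomorphism of the first step is $PU(\mathbb{H})$-equivariant, so it assembles into a globally defined isomorphism $\mathcal{A}\otimes\mathcal{A}^{op}\cong \mathcal{K}(S_{\mathcal{A}})$, which is what the definition of spinor bundle requires.

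The only real thing to check is the equivariance in the globalisation step; everything else is essentially the fibrewise linear-algebra identity. I expect this to be the main (minor) obstacle, because one has to be careful that the opposite structure on $\mathcal{A}^{op}$ is compatible with the conjugation-action convention used in the associated bundle, and that the cancellation of the $U(1)$ centre is what produces a $PU(\mathbb{H})$-equivariant (rather than merely $U(\mathbb{H})$-equivariant) identification. Once this is verified, $S_{\mathcal{A}}$ is manifestly canonical, since no trivialisation of $\mathcal{A}$ or choice of local section of $\mathfrak{P}$ enters its construction.
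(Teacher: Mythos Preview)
Your proposal is correct and is exactly the standard construction: the paper does not give its own argument here but simply refers to \cite{BCW}, and the proof there is precisely the Hilbert--Schmidt/associated-bundle construction you describe, with the same key observation that the conjugation action on $HS(\mathbb{H})\cong\mathbb{H}\otimes\bar{\mathbb{H}}$ descends from $U(\mathbb{H})$ to $PU(\mathbb{H})$.
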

The proof can be found in \cite{BCW}.
\par
\noindent
Now we give a construction which is useful in the proof of the main theorem in this section. Assume $\mathcal{K}_1$ and $\mathcal{K}_2$ are two $\mathcal{K}$-bundles over $X$ and $\lambda: \mathcal{K}_1\rightarrow \mathcal{K}_2$ is an isomorphism. Then we can get a $\mathcal{K}$-bundle over $X\times [0, 1]$ as follows. First we can see that $\mathcal{K}_1\times [0, 1/2]$ and $\mathcal{K}_2\times [1/2, 1]$ are $\mathcal{K}$-bundle over $X\times[0, 1/2]$ and $X\times [1/2, 1]$ respectively. Then we can glue $\mathcal{K}_1\times [0, 1/2]$ and $\mathcal{K}_2\times [1/2, 1]$ at $\{1/2 \}\times X$ via $\lambda$ and get a $\mathcal{K}$-bundle $\mathcal{K}_0$ over $X\times [0,1]$.
Now we give the main theorem of this section.
\begin{theorem}
Let $X$ be a locally finite $CW$-complex. There exists an isomorphism $\mathcal{F}: K^g(X, \alpha)\rightarrow K^{geo}(X, \mathcal{A})$.
\end{theorem}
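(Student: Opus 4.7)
The basic dictionary is supplied by Plymen's theorem together with Lemmas \ref{K bundle} and \ref{canonical spinor bundle}: a homotopy $\eta$ between $\alpha\circ\iota$ and $W_3\circ\upsilon$ is the same data, up to homotopy of homotopies, as an isomorphism $\lambda_{\eta}\colon\iota^{\ast}\mathcal{A}\xrightarrow{\cong}\mathrm{Cliff}^{+}_{\mathbb{C}}(TM)$ of $\mathcal{K}$-bundles on $M$. Indeed, viewing $\eta$ as a map $M\times I\to K(\mathbb{Z},3)$, the associated $\mathcal{K}$-bundle over $M\times I$ is pulled back from some bundle on $M$ by Lemma \ref{K bundle}, and its two endpoint restrictions furnish $\lambda_{\eta}$. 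Given $\lambda_{\eta}$, the twist $\mathrm{Cliff}^{+}_{\mathbb{C}}(TM)\otimes\iota^{\ast}\mathcal{A}^{op}$ becomes identified with $\iota^{\ast}\mathcal{A}\otimes\iota^{\ast}\mathcal{A}^{op}$, which carries a canonical spinor bundle $S_{\mathrm{can}}$ by Lemma \ref{canonical spinor bundle}; pulling this back through $\lambda_{\eta}$ yields a spinor bundle $S_{\eta}$ for $\mathrm{Cliff}^{+}_{\mathbb{C}}(TM)\otimes\iota^{\ast}\mathcal{A}^{op}$. After choosing a representative vector bundle $E$ of the class $[E]$, I would define
\[
\mathcal{F}(M,\iota,\upsilon,\eta,[E]):=(M,E,\iota,S_{\eta}).
\]
Conversely, given a $D$-cycle $(M,E,\phi,S)$, the existence of $S$ forces $W_3\circ\upsilon=\alpha\circ\phi$, and $S$ itself specifies a preferred null-homotopy $\eta_S$ through the same dictionary; one sets $\mathcal{G}(M,E,\phi,S):=(M,\phi,\upsilon,\eta_S,[E])$.

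To see that $\mathcal{F}$ and $\mathcal{G}$ are well defined on equivalence classes I would check the three elementary moves individually. Direct sum--disjoint union is immediate from the construction. Bordism is handled by applying the same gluing/pullback recipe on a cobordism $W$: a homotopy $\eta$ on $W$ produces a spinor bundle on $W$ for the Clifford-tensor-pullback twist whose positive part restricts on $\partial W$ to the spinor bundle previously assigned to the boundary, and the reverse direction uses Lemma \ref{K bundle} on $W\times I$. Spin$^c$-vector bundle modification matches because the Bott generator $\beta$ over $\hat{M}$ used in the BCW move agrees fiberwise with the positive spinor bundle $S_V^{+}$ of $V$ used in the BLW move, so $\rho^{\ast}S_{\eta}$ on one side corresponds exactly to the composite homotopy $\eta\circ\rho$ on the other.

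The genuine obstacle is to verify that $\mathcal{F}$ and $\mathcal{G}$ are mutually inverse at the level of classes, not merely of cycles. Both the set of spinor bundles for a fixed twist on $M$ and the set of homotopies between two fixed maps $M\to K(\mathbb{Z},3)$ are torsors over $H^{2}(M,\mathbb{Z})\cong[M,K(\mathbb{Z},2)]$, and the assignment $\eta\mapsto S_{\eta}$ is a map of torsors; but two choices differing by a line bundle $L$ a priori produce distinct cycles. The crucial step is therefore to show that replacing $S$ by $S\otimes L$ on the BCW side, and the corresponding twist of $\eta$ on the BLW side, does not change the class in $K^{geo}_{\ast}(X,\mathcal{A})$ respectively $K^{g}_{\ast}(X,\alpha)$. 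I would establish this via an explicit bordism over $M\times I$ carrying a continuously interpolating family of twist data, combined with the direct-sum relation absorbing the difference $[E\otimes L]-[E]$ and a vector bundle modification to match Chern characters. Once this torsor compatibility is in hand, $\mathcal{G}\circ\mathcal{F}$ and $\mathcal{F}\circ\mathcal{G}$ become the identity on classes.
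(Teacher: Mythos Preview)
Your basic strategy coincides with the paper's: translate between the homotopy $\eta$ and the spinor bundle $S$ by passing through the $\mathcal{K}$-bundle on $M\times I$ furnished by Lemmas~\ref{K bundle} and~\ref{canonical spinor bundle}, and then verify compatibility with disjoint union, bordism, and spin$^c$ modification. (The paper happens to write its explicit map in the direction $K^{geo}\to K^{g}$, but the dictionary is the same.)

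The genuine gap is in your final paragraph. You correctly note that spinor bundles for a fixed twist and homotopy classes of $\eta$'s are each $H^{2}(M,\mathbb{Z})$-torsors, but you then propose to finish by proving that \emph{twisting $S$ by an arbitrary line bundle $L$ does not change the $K^{geo}$-class}. That statement is false. Already in the untwisted case the spinor bundle is simply a spin$^c$ structure on $M$, and $(M,E,\phi,S\otimes L)$ is equivalent to $(M,E\otimes L,\phi,S)$, not to $(M,E,\phi,S)$; on $M=S^{2}$ these have different Dirac indices and hence different $K$-homology classes. There is no bordism on $M\times I$ whose spinor bundle restricts to $S$ at one end and to $S\otimes L$ at the other unless $L$ is trivial, and no ``direct-sum relation'' will absorb $[E\otimes L]-[E]$, so the argument you sketch cannot close.

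The resolution the paper uses (somewhat implicitly) is not to kill the torsor action but to observe that the two constructions $\eta\mapsto S_{\eta}$ and $S\mapsto\eta_{S}$ are genuinely \emph{inverse} as maps of torsors. Unwinding the gluing gives $\eta_{S_{\eta}}$ homotopic to $\eta$ and $S_{\eta_{S}}\cong S$, so $\mathcal{G}\circ\mathcal{F}$ and $\mathcal{F}\circ\mathcal{G}$ return the original cycle on the nose (up to a homotopy of $\eta$, which is absorbed by a bordism on $M\times I$). Once you establish that, no separate $H^{2}$-invariance argument is needed, and your attempted one should be dropped.
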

\noindent
To make the proof easier to read, we give the basic idea first. The idea is that we transform spinor bundles in $D$-cycles to $\mathcal{K}$-bundles over $M\times [0, 1]$. Then we use this $\mathcal{K}$-bundles to define homotopies in $\alpha$-twisted spin$^c$-manifolds and also define $F$ below. And we reverse the procedure to prove that $F$ is surjective. The injectivity of $F$ is essentially implied by the compatibility of $F$ with $\sim$. Now we start the proof.
\begin{proof}
Let $[x]$ be a class in $K^{geo}(X, \mathcal{A})$ and $(M^g, E, \phi, S)$ be a $D$-cycle which represents $[x]$. By definition $S$ is a spinor bundle of Cliff$^+_{\mathbb{C}(TM)}\otimes \phi^{\ast}\mathcal{A}^{op}$. And we denote the chosen isomorphism between $\mathcal{K}(S)$ and Cliff$^+_{\mathbb{C}}(TM)\otimes \phi^{\ast}\mathcal{A}^{op}$ by $\lambda$. We define $F([x])$ in $K^g(X, \alpha)$ to be a class represented by $(M, \phi, \upsilon, \eta, E)$, in which
\begin{itemize}
\item $M$ is the manifold of $M^g$ forgetting Riemiannian structure, $\phi$ and $E$ are the same map and bundle in the $D$-cycle;
\item $\upsilon$ is the classifying map of the stable normal bundle of $M$;
\item $\eta$ is a homotopy between $W_3\circ \upsilon$ and $\alpha\circ \phi$.
\end{itemize}
We only need to explain how $\eta$ is constructed from $(M^g, E, \phi, S)$. By Lemma \ref{canonical spinor bundle} we know that there exists a canonical Hilbert bundle $V$ over $M$ and a canonical isomorphism $c$ between $\mathcal{K}(V)\cong \phi^{\ast}\mathcal{A}\otimes \phi^{\ast}\mathcal{A}^{op}$. Combine $c$ and $h$ we get an isomorphism between $\mathcal{K}(S)\otimes \mathcal{A}$ and Cliff$^+_{\mathbb{C}}(TM)\otimes \mathcal{K}(V)$. According to the discussion before the theorem we can obtain a $\mathcal{K}$-bundle $\mathcal{W}$ over $M\times I$ such that $\mathcal{W}_{M\times \{0\}}\cong \mathcal{K}(S)\otimes \mathcal{A}$ and $\mathcal{W}_{M\times \{1\}}\cong Cliff^+_{\mathbb{C}}(TM)\otimes \mathcal{K}(V)$. Since $BPU(\mathbb{H})$ is also a classifying space of $\mathcal{K}$-bundles, therefore we can get that there exists an $\eta: X\times [0, 1]\rightarrow K(\mathbb{Z}, 3)$ such that $(\eta\circ (\phi\times Id))^{\ast}(\mathfrak{K})$ is isomorphic to $\mathcal{W}$. Moreover, we get two maps $\alpha_0$, $\alpha_1: X\rightarrow K(\mathbb{Z}, 3)$ by restricting $\eta$ to $X\times \{0\}$ and $X\times \{1\}$ respectively, which gives the following isomorphisms:
\begin{equation}
(\alpha_0\circ \phi)^{\ast}(\mathfrak{K})\cong \mathcal{K}(S)\otimes \mathcal{A}, \ \ (\alpha_1\circ \phi)^{\ast}(\mathfrak{K})\cong Cliff^+_{\mathbb{C}}(TM)\otimes \mathcal{K}(V)
\end{equation}
Since different choices of $\eta$ are homotopic to each other, so they represent the same class in $K^g(X, \alpha)$ via bordism relation. To show that $F$ is well defined, we still need to check that it is compatible with the relations which define $\sim$.
\begin{itemize}
\item From the definition of $F$ we can see
\begin{eqnarray*}
F([(M^g, E_1, \phi, S)]\cup [(M^g, E_2, \phi, S)])= [(M, E_1\oplus E_2, \phi, S)];\\
F([M^g, E_1, \phi, S])\cup F([M^g, E_2, \phi, S])= [(M, E_1\oplus E_2, \phi, S)]
\end{eqnarray*}
i.e $F$ respects the disjoint union relation.
\item Let $(M^g, E, \phi, S)$ be a bordism between $(M^g_0, E_0, \phi_0, S_0)$ and $(M^g_1, E_1, \phi_1,\\
S_1)$. Denote the associated isomorphisms of the spinor bundles by $h$, $h_0$ and $h_1$. Denote the chosen representing cycles of the image of their homology classes under $F$ by $(M, \phi, \upsilon, \eta, E)$, $(M_0, \phi_0, \upsilon_0, \eta_0, E_0)$ and $(M_1, \phi_1, \upsilon_1, \\
\eta_1, E_1)$ respectively. Choosing a tubular neighborhood of $M_0$ in $M$ we can get that $TM|_{M_0}\cong TM_0\otimes \underline{\mathbb{R}}$, so we get the stable normal bundle of $M_0$ agrees with the restriction of the stable normal bundle of $M$ to $M_0$ i.e $\upsilon|_{M_0}$ is homotopic to $\upsilon_0$. Similarly we can get $\upsilon_{M_1}$ is homotopic to $\upsilon_1$.
\par
  Let $\mathcal{W}$, $\mathcal{W}_0$ and $\mathcal{W}_1$ be the three $\mathcal{K}$-bundles over $M\times [0,1]$, $M_0\times [0, 1]$ and $M_1\times [0, 1]$ respectively. The isomorphism $c$ in Lemma \ref{canonical spinor bundle}  is  natural  and $h|_{\mathcal{K}(S_i)}= h_i$ ($i=0$, $1$), so we get that $\mathcal{W}|_{M_i\times[0, 1]}$  is isomorphic to $\mathcal{W}_i$ ($i=0$, $1$). This implies that $\eta|_{M\times \{i\}}$  is homotopic to $\eta_i$ ($i=0$, $1$). So $F$ is compatible with bordism.
\item  Use the notion in Section  $2.1$ and denote  $F([(\hat{M}^g, S^+_V\otimes\rho^{\ast}E, \phi\circ \rho, \rho^{\ast}S)$ by $[\hat{M}, S^+_V\otimes \rho^{\ast}E, \phi\circ \rho, \upsilon', \eta']$.  We only need to prove that $\eta'$ is homotopic the $\eta\circ \pi$. The is implied by the observation that the corresponding $\mathcal{K}$-bundles $\mathcal{V}'$ and $\mathcal{V}''$ over $\hat{M}\times [0, 1]$ are isomorphic.
\end{itemize}
Obviously $F$ maps a trivial $D$-cycle to a trivial geometric twisted $K$-cycle, so we get that $F$ is a well defined injective homomorphism. The left is to show that $F$ is surjective. For any class $[y]\in K^g(X, \alpha)$, choose a geometric cycle $(M, \phi, \upsilon, \eta, E)$ to represent it. $\eta$ induces a $\mathcal{K}$-bundle over $M\times [0, 1]$, which we denote by $\mathcal{X}$. By the definition of $\eta$ we have
$\mathcal{X}|_{M\times \{0\}}\cong$ Cliff$^+_{\mathbb{C}}(TM)\otimes \mathcal{K}$,   $\mathcal{X}|_{M\times \{1\}}\cong (\alpha\circ \phi)^{\ast}(\mathfrak{K})$.
Let $l$ be an isomorphism between Cliff$^+_{\mathbb{C}}(TM)\otimes \mathcal{K}$ and $(\alpha\circ \phi)^{\ast}(\mathfrak{K})$ and $\mathcal{L}$ be a $\mathcal{K}$-bundle constructed via gluing Cliff$^+_{\mathbb{C}}(TM)\otimes \mathcal{K}\times [0, 1/2]$ and $(\alpha\circ \phi)^{\ast}(\mathfrak{K}) \times [1/2, 1]$ by $l$ at $M\times \{1/2\}$. Then we have that $\mathcal{L}$ is isomorphic $\mathcal{X}$ since they have the same Dixmier-Dourady class. Combining $l$ with the canonical isomorphism in Lemma \ref{canonical spinor bundle}, we get a spinor bundle $S$ for Cliff$^+_{\mathbb{C}}(TM)\otimes \phi^{\ast}\mathcal{A}^{op}$. The $D$-cycle $(M^g, E, \phi, S)$ satisfies that $F([M^g, E, \phi, S])=[y]$.
\end{proof}
\begin{remark}
The above proposition shows that geometric $K$-cycles in \cite{BCW} and $D$-cycles in \cite{BLW} are equivalent to each other. Therefore we can choose one of them to construct the topological $T$-duality for geometric twisted $K$-homology. Moreover, $K^g(X, \alpha)\cong K^a(X, \alpha)$ imples that $K^{geo}(X, \alpha)$ is also isomorphic to $K^a(X, \alpha)$.
\end{remark}
\section{Properties of geometric twisted K-homology}
In this section we prove that geometric twisted $K$-homology satisfies Elienberg-Steenrod axioms of homology theory.
We first give a simple lemma.
\begin{lemma}\label{induced map}
If $f: Y\rightarrow X$ is a continuous map, then $f$ induces a homomorphism $f_{\ast}: K^g(Y, \alpha\circ f)\rightarrow K^g(X, \alpha)$.
\end{lemma}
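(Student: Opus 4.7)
The plan is to define $f_\ast$ on representatives by post-composing the reference map with $f$: given a geometric cycle $(M, \iota, \upsilon, \eta, [E])$ for $(Y, \alpha\circ f)$, I would send it to
\begin{equation*}
f_\ast(M, \iota, \upsilon, \eta, [E]) := (M, f\circ \iota, \upsilon, \eta, [E]).
\end{equation*}
The crucial observation is purely formal: $\alpha\circ(f\circ\iota) = (\alpha\circ f)\circ\iota$ as maps $M\to K(\mathbb{Z},3)$, so the homotopy $\eta$ between $(\alpha\circ f)\circ\iota$ and $W_3\circ \upsilon$ supplied with the original cycle is exactly the data required for an $\alpha$-twisted $\mathrm{spin}^c$ structure on $M$ along the new reference map $f\circ\iota$. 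The other ingredients of the quintuple ($M$, $\upsilon$, $[E]$) are unchanged, so the output is genuinely a geometric cycle for $(X, \alpha)$.

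Next I would verify that this assignment descends to the quotient by $\sim$ by running through the three generating relations. Direct sum/disjoint union is immediate, since post-composition with $f$ commutes with disjoint unions of cycles and with the operation of forming $[E_1]+[E_2]$. For bordism, if $(W, \iota_W, \upsilon_W, \eta_W, [\mathcal{E}])$ witnesses a bordism between two cycles over $(Y, \alpha\circ f)$, then $(W, f\circ \iota_W, \upsilon_W, \eta_W, [\mathcal{E}])$ is manifestly a bordism between their $f_\ast$-images over $(X, \alpha)$: the tangential data $\upsilon_W, \eta_W$, the $K$-theory class, and the boundary identifications are untouched. For $\mathrm{spin}^c$ vector bundle modification along $V\to M$, the modified cycle $(\hat M, \iota\circ \rho, \upsilon\circ\rho, \eta\circ\rho, \rho^\ast E\otimes S_V^+)$ is sent by $f_\ast$ to $(\hat M, f\circ \iota\circ\rho, \upsilon\circ\rho, \eta\circ\rho, \rho^\ast E\otimes S_V^+)$, which is literally the vector bundle modification along $V$ of $f_\ast(M, \iota, \upsilon, \eta, [E])$.

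Finally, addition in $K^g_\ast$ is disjoint union, and $f_\ast$ preserves disjoint unions by construction, so $f_\ast$ is a group homomorphism; dimensional parity is preserved because $f_\ast$ does not alter the underlying manifold $M$, so the splitting into $K^g_0$ and $K^g_1$ is respected. There is no substantive obstacle in this lemma: the entire content is the tautological identity $(\alpha\circ f)\circ \iota = \alpha\circ (f\circ\iota)$, together with the fact that none of the relations in the definition of $\sim$ see the twisting map $\alpha$ beyond this composition. Accordingly I expect the written proof to occupy only a few lines.
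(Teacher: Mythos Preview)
Your proposal is correct and follows essentially the same approach as the paper: define $f_\ast$ by post-composing the reference map with $f$, then verify compatibility with disjoint union, bordism, and spin$^c$-vector bundle modification. If anything you are slightly more careful, explicitly noting the tautology $(\alpha\circ f)\circ\iota=\alpha\circ(f\circ\iota)$ and the preservation of parity, which the paper leaves implicit.
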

\begin{proof}
Given a twisted geometric $K$-cycle $(M, \phi, \upsilon, \eta, E)$ over $(Y, \alpha\circ f)$, we define $f_{\ast}$ by
\begin{equation}
f_{\ast}(M, \phi, \upsilon, \eta, E)= (M, f\circ \phi, \upsilon, \eta, E)
\end{equation}
We need to check that $f_{\ast}$ is compatible with disjoint union, bordism and spin$^c$-vector bundle modification.
\begin{itemize}
 \item Given two geometric $K$-cycles $(M, \phi, \upsilon, \eta, E_i)$ ($i=1, 2$), we have
  \begin{equation*}
  f_{\ast}((M, \phi, \upsilon, \eta, E_1)\cup (M, \phi, \upsilon, \eta, E_2))=(M, \phi\circ f, \upsilon, \eta, E_1\oplus E_2)
  \end{equation*}
\item If $(M, \phi, \upsilon, \eta, E)$ is a bordism between $(M_1, \phi_1, \upsilon_1, \eta_1, E_1)$ and $(M_2, \phi_2, \upsilon_2, \\
  \eta_2, E_2)$, then clearly $(M, f\circ\phi, \upsilon, \eta, E)$ gives a bordism between $(M_1, f\circ \phi_1, \upsilon_1, \eta_1, E_1)$ and $(M_2, f\circ\phi_2, \upsilon_2, \eta_2, E_2)$.
\item Since $f((\hat{M}, \phi\circ \rho, \upsilon\circ \rho, \eta\circ (\rho\times Id), \rho^{\ast}E\otimes S^+_V))$ is $(\hat{M}, f\circ \phi\circ \rho, \upsilon\circ \rho, \eta\circ (\rho\times Id), \rho^{\ast}E\otimes S^+_V)$, which is exactly a spin$^c$-vector bundle modification of $(M, f\circ \phi, \upsilon, \eta, E)$, so we get that $f_{\ast}$ respects the spin$^c$-vector bundle modification relation.
\end{itemize}
\end{proof}
\begin{theorem}[\textbf{Homotopy}]\label{homotopy}
If $f: Y\rightarrow X$ is a homotopy equivalence, then the induced map $f_{\ast}: K^g_{\ast}(Y, \alpha\circ f)\rightarrow K^g_{\ast}(X, \alpha)$ is an isomorphism.
\end{theorem}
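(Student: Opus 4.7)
The plan is to establish homotopy invariance of $K^g_*$ in two stages: first invariance under a homotopy of the twist on a fixed space, and second invariance under a homotopy of maps between spaces. The theorem then follows by the standard homotopy-inverse argument.

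\textbf{Stage 1 (homotopy invariance in the twist).} Let $K : X \times I \to K(\mathbb{Z}, 3)$ be a homotopy from $\beta_0$ to $\beta_1$. For a cycle $(M, \phi, \upsilon, \eta, E) \in \Gamma(X, \beta_0)$, the datum $\eta$ is a homotopy between $\beta_0 \circ \phi$ and $W_3 \circ \upsilon$. Concatenating the reversed homotopy $K \circ (\phi \times \mathrm{id}_I)$ with $\eta$ yields a homotopy $\eta'$ between $\beta_1 \circ \phi$ and $W_3 \circ \upsilon$, and I set
\[
\Phi_K(M, \phi, \upsilon, \eta, E) := (M, \phi, \upsilon, \eta', E).
\]
I would verify that $\Phi_K$ respects disjoint union, bordism, and spin$^c$-vector bundle modification, so it descends to a well-defined homomorphism, and that $\Phi_{\bar K}$ (for the reversed homotopy) provides a two-sided inverse. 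This yields an isomorphism $\Phi_K : K^g_*(X, \beta_0) \to K^g_*(X, \beta_1)$.

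\textbf{Stage 2 (homotopic maps induce the same map).} Suppose $H : Y \times I \to X$ is a homotopy from $f$ to $g$. The claim is that, under the identification from Stage~1 applied to $\alpha \circ H$, we have $f_* = g_* \circ \Phi_{\alpha \circ H}$. Given a cycle $(M, \phi, \upsilon, \eta, E)$ on $(Y, \alpha \circ f)$, the manifold $W := M \times I$ is a compact oriented bordism with $\partial W = -M \sqcup M$. Equip $W$ with the map $H \circ (\phi \times \mathrm{id}_I) : W \to X$, the stable normal classifier $\upsilon \circ \mathrm{pr}_M$, and a homotopy $\tilde\eta$ between $\alpha \circ H \circ (\phi \times \mathrm{id}_I)$ and $W_3 \circ \upsilon \circ \mathrm{pr}_M$ assembled from $\eta$ and $\alpha \circ H$ (the associated $\mathcal{K}$-bundle over $W \times I$ can be built and recognised as a pullback using Lemma~\ref{K bundle}). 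Together with $\mathrm{pr}_M^* E$, this realises a bordism between $f_*(M, \phi, \upsilon, \eta, E) = (M, f \circ \phi, \upsilon, \eta, E)$ and $g_*\bigl(\Phi_{\alpha H}(M, \phi, \upsilon, \eta, E)\bigr) = (M, g \circ \phi, \upsilon, \eta', E)$.

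\textbf{Stage 3 (conclusion).} Let $g : X \to Y$ be a homotopy inverse of $f$, with homotopies $H_1 : g \circ f \simeq \mathrm{id}_Y$ and $H_2 : f \circ g \simeq \mathrm{id}_X$. Combining functoriality (Lemma~\ref{induced map}) with Stage~2 applied to $H_1$ and $H_2$ shows that $g_* \circ f_*$ and $f_* \circ g_*$ agree, respectively, with $\Phi_{\alpha \circ f \circ H_1}$ and $\Phi_{\alpha \circ H_2}$; each such $\Phi$ is an isomorphism by Stage~1, so $f_*$ admits a two-sided inverse. The main obstacle is Stage~2: the twist datum $\tilde\eta$ on $W = M \times I$ must restrict, on the two boundary components, to $\eta$ and to the concatenated $\eta'$, not merely up to further homotopy. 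Making this matching rigorous requires careful book-keeping of the concatenation of homotopies, or equivalently a gluing argument for the associated $\mathcal{K}$-bundles in the spirit of the cylindrical construction used in Section~3.
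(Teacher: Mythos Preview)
Your proposal is correct and follows essentially the same route as the paper: construct an isomorphism $\Phi_K$ (the paper's $H_*$) by concatenating the twist homotopy, verify $f_* = g_* \circ \Phi_{\alpha H}$, and conclude via the homotopy inverse. The paper asserts the identity $f_* = g_* \circ H_*$ with a single ``clearly'', whereas your Stage~2 actually supplies the bordism $W=M\times I$ with map $H\circ(\phi\times\mathrm{id})$ that justifies it; your honest remark about matching $\tilde\eta$ on the two boundary components is the only genuine bookkeeping to be done, and it is handled (as you note) because homotopic choices of $\eta$ yield bordant cycles.
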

\begin{proof}
We first show that if $g: Y\rightarrow X$ is homotopic to $f$, then $K^g_{\ast}(Y, \alpha\circ f)\cong K^g_{\ast}(Y, \alpha\circ g)$. Let $H: Y\times [0, 1]\rightarrow X$ be a homotopy from $f$ to $g$ i.e $H(y, 0)=f(y)$ and $H(y, 1)= g(y)$. Given a twisted geometric $K$-cycle $(M, \phi, \upsilon, \eta, E)$ over $(Y, \alpha\circ f)$, we get a twisted geometric $K$-cycle over $(Y, \alpha\circ g)$ as follows: $(M, \phi, \upsilon, \eta', E)$. Here $\eta': M\times [0, 1]\rightarrow K(\mathbb{Z}, 3)$ is defined by
\begin{eqnarray*}
\eta'(m, t)=
\begin{cases}
\eta (m, 2t) & 0\leq t\leq 1/2  \\
\alpha \circ H(m, 2t-1)\circ (\phi\times Id) & 1/2\leq t\leq 1
\end{cases}
\end{eqnarray*}
It is not hard to check that the above map is compatible with the disjoint union, bordism and spin$^c$-vector bundle modification. We skip the details here since they are similar to the proof of the above lemma. Therefore we get a homomorphism  $H_{\ast}$ from $K^g_{\ast}(Y, \alpha\circ f)$ to $K^g_{\ast}(Y, \alpha\circ g)$. Similarly we can get the inverse of $H_{\ast}$ by using $H(1-t, y)$ as a homotopy from $g$ to $f$. So we get that $H_{\ast}$ is an isomorphism.
Clearly, we have that $f_{\ast}= g_{\ast}\circ H_{\ast}$.
Let $q: X\rightarrow Y$ be a homotopy inverse of $f: Y\rightarrow X$ i.e $f\circ q$ is homotopic to $id_X$ and $q\circ f$ is homotopic to $id_Y$. Denote the associated homotopies by $H_1$ and $H_2$ respectively. Then we have that $(f\circ q)_{\ast}= (H_1)_{\ast}$ and $(q\circ f)_{\ast}= (H_2)_{\ast}$. Since $(H_1)_{\ast}$ and $(H_2)_{\ast}$ are both isomorphisms, we get that $f_{\ast}$ is an isomorphism as well.
\end{proof}
\begin{theorem}[\textbf{Excision}]\label{Excision}
Let $(X, Y)$ be a pair of locally finite $CW$-spaces, $U$ is an open set of $X$ such that $\overline{U}\in Y$. Then we the inclusion
\begin{equation*}
 i: (X-U, Y-U)\hookrightarrow (X, Y)
\end{equation*}
induces an isomorphism
\begin{equation}
K^g_{\ast}(X-U, Y-U; \alpha\circ i)\cong K^g_{\ast}(X, Y; \alpha)
\end{equation}
\end{theorem}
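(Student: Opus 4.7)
The strategy is to construct an explicit two-sided inverse $j$ to $i_{\ast}$ by cutting cycles along the preimage of a hypersurface separating $\overline{U}$ from $X \setminus Y$. Given a geometric cycle $(M,\iota,\upsilon,\eta,[E])$ for $(X,Y,\alpha)$, the CW-pair hypothesis lets us pick an open $V \subset X$ with $\overline{U}\subset V\subset \overline{V}\subset Y$. After a small homotopy of $\iota$ (which does not change the class, by the relative version of Theorem \ref{homotopy}), we may assume $\iota$ is smooth on a neighborhood of $\iota^{-1}(\overline V\setminus U)$ and transverse to a smooth codimension-one submanifold $N\subset\overline V\setminus U$ separating $\overline{U}$ from $X\setminus V$; the preimage $\iota^{-1}(N)$ is then a smooth codimension-one submanifold of $M$. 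Setting $M':=\overline{M\setminus\iota^{-1}(V_0)}$, where $V_0$ is the open component of $\overline V\setminus N$ containing $\overline U$, and smoothing corners where $\iota^{-1}(N)$ meets $\partial M$, we obtain a compact manifold with $\iota(M')\subset X\setminus U$ and $\iota(\partial M')\subset Y\setminus U$. The data $\upsilon,\eta,[E]$ all restrict verbatim, and we define
\[
j\bigl([M,\iota,\upsilon,\eta,[E]]\bigr):=\bigl[M',\iota|_{M'},\upsilon|_{M'},\eta|_{M'},[E|_{M'}]\bigr]\in K^g_{\ast}(X-U,Y-U;\alpha\circ i).
\]

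Well-definedness of $j$ rests on a single key observation: \emph{any cycle $(N,\psi,\upsilon,\eta,[E])$ for $(X,Y,\alpha)$ whose map $\psi$ factors through $Y$ is zero in $K^g_{\ast}(X,Y,\alpha)$}, proved by using $N\times[0,1]$ (with data pulled back along $\psi\circ\mathrm{pr}_1$) as a null-bordism, which is valid because the whole cylinder still maps into $Y$. From this lemma, two different admissible choices of $V$ (and of transversal perturbation) produce cycles differing by a piece with image in $Y\setminus U$, hence yield the same class in the target. Compatibility with disjoint union is immediate, compatibility with bordism follows by cutting the bordism $W$ by $\Phi^{-1}(V_0)$ using the same transversality argument applied to its interior and collaring its boundary, and compatibility with spin$^c$-vector bundle modification follows because $\rho:\hat M\to M$ preserves the preimage structure, so $\rho^{-1}(\iota^{-1}(V_0))=(\iota\circ\rho)^{-1}(V_0)$.

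It remains to check that $j$ inverts $i_{\ast}$. For $i_{\ast}\circ j=\mathrm{id}$, starting from $(M,\iota,\upsilon,\eta,[E])$ on $(X,Y,\alpha)$, the excised piece $K:=\overline{\iota^{-1}(V_0)}$ is a cycle (after corner smoothing) whose image lies in $\overline V\subset Y$; by the key lemma $[K]=0$, and a standard bordism on $M\times[0,1]$ gluing $K\times\{1\}$ to $M'\times\{1\}$ along $\iota^{-1}(N)$ shows $[M]=[M']+[K]=[M']$ in $K^g_{\ast}(X,Y,\alpha)$. For $j\circ i_{\ast}=\mathrm{id}$ on $K^g_{\ast}(X-U,Y-U;\alpha\circ i)$, any representative cycle already satisfies $\iota(M)\cap U=\emptyset$, so $\iota^{-1}(V_0)\subset\iota^{-1}(Y\setminus U)$ maps entirely into $Y\setminus U$, and the same bordism shows the cut-down cycle is equivalent to the original in $K^g_{\ast}(X-U,Y-U;\alpha\circ i)$. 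The main obstacle I expect is the transversality/smoothing step: making the continuous map $\iota$ smooth and transverse near $N$ while simultaneously keeping $\upsilon$ a classifier of the stable normal bundle and $\eta$ a valid homotopy is routine but requires care, as does verifying that the corner-smoothing at $\iota^{-1}(N)\cap\partial M$ does not disturb the twisted spin$^c$ data; both are handled by the standard collar-neighborhood technique for Baum--Douglas cycles.
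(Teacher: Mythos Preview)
Your strategy is essentially the same as the paper's: both arguments cut the manifold $M$ along a codimension-one hypersurface separating the part mapping into $U$ from the rest, and use a bordism (over $M\times[0,1]$, or a piece of it) to show the truncated cycle represents the same class. Your construction of an explicit two-sided inverse $j$, with the key lemma that any cycle factoring through $Y$ is null, is in fact more careful than the paper, which only argues surjectivity of $i_\ast$ explicitly and asserts injectivity ``by definition''.

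There is, however, a genuine technical gap in your setup. You posit a \emph{smooth codimension-one submanifold} $N\subset\overline V\setminus U$ of $X$ and then make $\iota$ transverse to it. But $X$ is only a locally finite CW-complex, not a smooth manifold, so there is no ambient smooth structure in which to speak of submanifolds of $X$ or transversality of $\iota$ to them. The fix---and this is exactly what the paper does---is to separate $\overline U$ from $X\setminus V$ not by a hypersurface in $X$ but by a continuous function $f:X\to[0,1]$ (Urysohn's lemma applies since CW-complexes are normal) with $f|_{\overline U}=0$ and $f|_{X\setminus V}=1$. Then $f\circ\iota:M\to[0,1]$ is a real-valued function on the smooth manifold $M$; after a $C^0$-small smoothing of $f\circ\iota$ (which does not move points out of $\iota^{-1}(Y)$ if done carefully on a collar), choose a regular value $r\in(0,1)$ and cut along $(f\circ\iota)^{-1}(r)$. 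This puts all smoothness and transversality on $M$, where it is available, and the remainder of your argument (well-definedness of $j$, compatibility with the three relations, and the two identities $i_\ast\circ j=\mathrm{id}$ and $j\circ i_\ast=\mathrm{id}$) goes through verbatim with $(f\circ\iota)^{-1}(r)$ in place of $\iota^{-1}(N)$.
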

\begin{proof}
By Lemma \ref{induced map} the inclusion $i$ induces a homomorphism $i_{\ast}$. And moreover $i_{\ast}$ is injective by its definition. The remainder is to prove that $i_{\ast}$ is surjective. For any $y\in K^g_{\ast}(X, Y; \alpha)$, we choose a geometric cycle $(M, \phi, \upsilon, \eta, E)$ to represent it. We choose a point $p_0\in U$. By the Urysohn's Lemma, there exists a continuous function $f: X\rightarrow [0, 1]$ such that $f(p_0)=0$ and $f(x)=1$ for any $x\in X-U$. Then $f\circ \phi$ is a continuous function from $M$ to $[0, 1]$. Let $W$ be a sub-manifold of $M\times [0, 1]$ defined by $W=\{(t, (f\circ \phi)^{-1}([t, 1])| t\in [0, 1]\}$. Let $\pi: W\rightarrow M$ be the canonical projection. For each slice $W_t= (f\circ \phi)^{-1}([t, 1])$ in $W$, we denote the inclusion of $W_t$ to $M$ by $i_t$. Define $\eta': W\times [0, 1]$ as follows
\begin{equation*}
\eta'(m,s, t)= \eta\circ (i_s\times id)(m, t)
\end{equation*}
Here $(m, s)\in W$ and $t\in [0, 1]$
Then the $\alpha$-twisted spin$^c$-manifold $(W, \phi\circ \pi, \upsilon\circ \pi, \eta')$, which gives a bordism between $(M, \phi, \upsilon, \eta, E)$ and $(f^{-1}(X-U), \phi\circ j, \upsilon\circ j, \eta\circ (j\times Id), j^{\ast}E)$. Here $j: f^{-1}(X-U)\hookrightarrow M$ is the inclusion. While $(f^{-1}(X-U), \phi\circ j, \upsilon\circ j, \eta\circ (j\times Id), j^{\ast}E)$ is also a twisted geometric $K$-cycle over $(X-U, Y-U; \alpha\circ i)$, whose image under $i_{\ast}$ is equivalent to $(M, \phi, \upsilon, \eta, E)$.
\end{proof}
\begin{remark}
In the above proof we assume each slice $W_t$ to be a sub-manifold of $M$, this is not true in general. But here we assume that both $X$ and $U$ are homotopic to finite $CW$-complexes, so we can always find a smooth manifold which is homotopy equivalent to $W_t$. This makes the proof still works for general cases.
\end{remark}
Another important axiom in Eilenberg-Steenrod axioms is the long exact sequence. Before moving on to the long exact sequence of geometric twisted $K$-homology, we first introduce a notion.
\begin{definition}
A twist $\alpha: X\rightarrow K(\mathbb{Z}, 3)$ is called representable if there exists an oriented real vector bundle $V$ over $X$ such that $W_3(V)= [\alpha]$. Here $[\alpha]$ is the pullback of the generator of $H^3(K(\mathbb{Z}), 3)$ along $\alpha$.
\end{definition}
\begin{theorem}[\textbf{Six-term exact sequence}]\label{six term exact sequence}
Let $Y$ be a sub-space of $X$ and $i$ be the inclusion map from $Y$ to $X$. Let $\alpha$ be a representable twist over $X$. Then we have a six-term exact sequence:
\begin{equation*}
\begin{tikzpicture}
[shorten >=1pt,node distance=1.5cm,auto]
\node []  (X_0)  {$K^g_0(Y, \alpha\circ i)$};
\node []  (X_1)  [right=of X_0] {$K^g_0(X, \alpha)$};
\node[]  (X_2)[right=of X_1] {$K^g_0(X, Y; \alpha)$};
\node[]  (Y_0)[below=of X_0] {$K^g_1(X, Y; \alpha)$};
\node[]  (Y_1) [right=of Y_0] {$K^g_1(X, \alpha)$};
\node[]  (Y_2) [right=of Y_1]  {$K^g_1(Y, \alpha\circ i)$};
\path[->]
(X_0) edge[above] node {$i_{\ast}$} (X_1)
(X_1) edge[above] node {$j_{\ast}$}  (X_2)
(X_2) edge[right] node {$\delta$} (Y_2)
(Y_2) edge[below] node {$i_{\ast}$} (Y_1)
(Y_1) edge[below] node {$j_{\ast}$}  (Y_0)
(Y_0) edge[left]  node {$\delta$} (X_0);
\end{tikzpicture}
\end{equation*}
Here the boundary operator is given by
\begin{equation}
\delta([M, \phi, \upsilon, \eta, E])= [(\partial M, \phi|_{\partial M}, \upsilon|_{\partial M}, \eta|_{\partial M\times [0, 1]}, E|_{\partial M})]
\end{equation}
\end{theorem}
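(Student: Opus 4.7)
The plan is to adapt the classical Baum--Douglas proof of the six-term exact sequence in geometric $K$-homology to the twisted setting, propagating the extra twisting data $(\upsilon,\eta)$ through every cut-and-paste operation. I would first verify that $\delta$ is well defined on $K^g_\ast(X,Y;\alpha)$: compatibility with disjoint union is immediate, compatibility with relative bordism follows by restricting a bordism $(W,\Phi,\Upsilon,H,\mathcal E)$ to the appropriate component of $\partial W$, and compatibility with spin$^c$ vector bundle modification holds since the sphere-bundle construction commutes with taking boundaries. The three vanishings are then by explicit bordisms: for $j_\ast\circ i_\ast=0$ the cylinder $(M\times[0,1],\phi\circ\mathrm{pr}_1,\upsilon\circ\mathrm{pr}_1,\eta\circ(\mathrm{pr}_1\times\mathrm{id}),\mathrm{pr}_1^{\ast}E)$ over a closed cycle $(M,\phi,\dots)$ in $Y$ is a relative null-bordism whose entire boundary already sits in $Y$; $\delta\circ j_\ast=0$ is trivial because a closed manifold has no boundary; and $i_\ast\circ\delta=0$ since $(M,\phi,\dots)$ itself is an absolute $X$-bordism of $\delta[M,\phi,\dots]$.

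The content of the theorem is exactness at the three slots, which I would establish by geometric cut-and-paste. At $K^g_\ast(X,\alpha)$: if $j_\ast[M,\phi,\dots]=0$ via a relative bordism $W$ with $\partial W=M\sqcup N$ and $\Phi(N)\subset Y$, then $(N,\Phi|_N,\dots)$ is an absolute cycle over $(Y,\alpha\circ i)$, and $W$ serves simultaneously as an absolute $X$-bordism exhibiting $[M,\phi,\dots]=\pm i_\ast[N,\Phi|_N,\dots]$. At $K^g_\ast(X,Y;\alpha)$: if $\delta[M,\phi,\dots]=0$, choose a null-bordism $(Z,\Psi,\dots)$ over $(Y,\alpha\circ i)$ of $(\partial M,\phi|_{\partial M},\dots)$ and glue to obtain a closed manifold $M':=M\cup_{\partial M}Z$ with induced map to $X$; then $W:=(M\times[0,1])\cup_{\partial M\times\{1\}}Z$, after smoothing corners, is a relative bordism from $(M,\phi,\dots)$ to $(M',\dots)$, giving $j_\ast[M',\dots]=[M,\phi,\dots]$. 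At $K^g_\ast(Y,\alpha\circ i)$: if an absolute bordism $(W,\Phi,\dots)$ over $X$ witnesses $i_\ast[N,\psi,\dots]=0$, then, since $\Phi(\partial W)\subset i(Y)$, the same data is a relative cycle over $(X,Y,\alpha)$ and $\delta$ recovers $[N,\psi,\dots]$.

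The main obstacle is to carry out each of these gluings at the level of the twisted spin$^c$ datum, i.e.\ to extend the stable normal bundle classifying map $\upsilon$ and the homotopy $\eta$ across the seam $\partial M=\partial Z$. This is precisely where the representability of $\alpha$ enters: writing $[\alpha]=W_3(\xi)$ for an oriented real vector bundle $\xi\to X$, an $\alpha$-twisted spin$^c$-manifold $(M,\iota,\upsilon,\eta)$ is equivalent to a manifold $M$ equipped with an ordinary stable spin$^c$-structure on $TM\oplus\iota^{\ast}\xi$, since the Freed--Witten obstruction $W_3(\nu_M)-\iota^{\ast}[\alpha]$ to such a structure vanishes iff the homotopy $\eta$ exists. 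Under this correspondence the gluing and extension problems for $(\upsilon,\eta)$ become the standard gluing and extension problems for untwisted spin$^c$-structures along a hypersurface, which are dispatched by collar-neighbourhood arguments; unwinding the correspondence then produces the required global $(\upsilon',\eta')$ on $M'$ and on $W$. With this reduction the cut-and-paste constructions above go through and exactness follows.
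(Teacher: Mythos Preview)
Your argument has a genuine gap at exactness in $K^g_\ast(X,Y;\alpha)$. When you write ``choose a null-bordism $(Z,\Psi,\dots)$ over $(Y,\alpha\circ i)$ of $(\partial M,\phi|_{\partial M},\dots)$'', you are assuming that $\delta[M]=0$ produces a bordism whose boundary is literally $(\partial M,\dots)$. But the equivalence defining $K^g_\ast$ is generated by bordism \emph{and} spin$^c$ vector bundle modification, so all you actually obtain is a null-bordism $Z$ of some modification $\widehat{\partial M}=S(V\oplus\underline{\mathbb R})$ for a spin$^c$ bundle $V$ over $\partial M$. To cap off as you propose you would need to first modify $M$ itself by an extension of $V$ to all of $M$ --- and $V$ need not extend. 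This is precisely the obstruction the paper isolates in its Lemma~\ref{extension lemma}: one shows that $V$ may be replaced by $N(T\partial M)\oplus\iota_{\partial M}^\ast\mathbf V$, where $\mathbf V$ is the oriented bundle on $X$ with $W_3(\mathbf V)=[\alpha]$; both summands visibly extend over $M$. Representability is consumed \emph{here}, to make the (non--spin$^c$) normal bundle $N(T\partial M)$ into a spin$^c$ bundle by twisting with $\iota^\ast\mathbf V$, not in the gluing step.

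In fact the gluing of $(\upsilon,\eta)$ that you flag as the ``main obstacle'' is essentially automatic: once $Z$ is a bordism of the \emph{actual} boundary cycle $(\partial M,\phi|_{\partial M},\upsilon|_{\partial M},\eta|_{\partial M\times[0,1]},E|_{\partial M})$, the restriction of $(\upsilon_Z,\eta_Z)$ to $\partial Z$ agrees on the nose with $(\upsilon|_{\partial M},\eta|_{\partial M\times[0,1]})$ by the very definition of bordism of $\alpha$-twisted spin$^c$ manifolds, so the data patch without further input. Your reinterpretation of an $\alpha$-twisted spin$^c$ structure as an ordinary spin$^c$ structure on $TM\oplus\iota^\ast\xi$ is correct and could in principle be used to attack the extension problem (it translates it into the untwisted Baum--Douglas ``normal bordism'' trick), but as written you deploy it only for gluing and never confront the modification--extension step; that step is where the argument currently breaks.
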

To prove this theorem, we first prove two lemmas as a preparation.
\begin{lemma}\label{vector bundle modification}
Let $\theta= (M, \iota, \upsilon, \eta, E)$ be a geometric $K$-cycle over $(X, \alpha)$ and $E_i$ ($i=1$, $2$) be spin$^c$-vector bundles over $M$ with even dimensional fibers. Denote the vector bundle modification of $\theta$ with a spin$^c$-vector bundle $F$ by $\theta_F$. Then we have that $\theta_{E_1\oplus E_2}$ is bordant to $(\theta_{E_1})_{p_1^{\ast}E_2}$, in which $p_1$ is the projection from the sphere bundle $S(E_1\otimes \underline{\mathbb{R}})$ to $M$.
\end{lemma}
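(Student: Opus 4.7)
The lemma asserts an associativity of vector bundle modification, up to bordism. My plan is to construct an explicit bordism of $\alpha$-twisted spin$^c$-manifolds with $K$-class between $\theta_{E_1\oplus E_2}$ and $(\theta_{E_1})_{p_1^{\ast}E_2}$, with all auxiliary data pulled back from $M$.

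Write $k_i := \dim E_i$, $\hat M_{12} := S(E_1\oplus E_2\oplus\underline{\mathbb R})\to M$ with projection $\rho$, and $\hat{\hat M} := S(p_1^{\ast}E_2\oplus\underline{\mathbb R}) \cong S(E_1\oplus\underline{\mathbb R})\times_M S(E_2\oplus\underline{\mathbb R})\to M$. These are closed fibre bundles over $M$ of the same total dimension, with fibres $S^{k_1+k_2}$ and $S^{k_1}\times S^{k_2}$ respectively. The first step is algebraic: the canonical multiplicative isomorphism
\begin{equation*}
S^{+}_{E_1\oplus E_2}\cong S^{+}_{E_1}\otimes S^{+}_{E_2}
\end{equation*}
for spinor bundles of direct sums of even-rank spin$^c$ bundles, together with the natural base-change identifications, shows that the $K^{0}$-classes $\rho^{\ast}E\otimes S^{+}_{E_1\oplus E_2}$ on $\hat M_{12}$ and $p_2^{\ast}(p_1^{\ast}E\otimes S^{+}_{E_1})\otimes S^{+}_{p_1^{\ast}E_2}$ on $\hat{\hat M}$ are both naturally identified with the pullback from $M$ of the single class $[E]\otimes[S^{+}_{E_1}]\otimes[S^{+}_{E_2}]$.

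For the bordism, I realise both $\hat M_{12}$ and $\hat{\hat M}$ as codimension-one submanifolds of the ambient sphere bundle $J := S(E_1\oplus\underline{\mathbb R}\oplus E_2\oplus\underline{\mathbb R})\to M$: $\hat M_{12}$ as the ``equator'' $\{y_2=0\}\cap J$, and $\hat{\hat M}$ (up to rescaling) as the level set of the partial radius $\{|v_1|^{2}+y_1^{2}=\tfrac{1}{2}\}\cap J$. The bordism $W$ is then a codimension-zero submanifold of $J$ bounded by these two hypersurfaces, with corners smoothed so that $\partial W = \hat M_{12}\sqcup(-\hat{\hat M})$. Since $J$ is itself a bundle over $M$, the maps $\iota$ and $\upsilon$ and the homotopy $\eta$ extend to $W$ by composition with the bundle projection $\pi_W\colon W\to M$, and the $K^{0}$-class on $W$ is $\pi_W^{\ast}([E]\otimes[S^{+}_{E_1}]\otimes[S^{+}_{E_2}])$, which restricts correctly on each boundary component by the algebraic step above.

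The main obstacle is two-fold: first, to arrange the corner-smoothing so that the two boundary components of $W$ are genuinely $\hat M_{12}$ and $\hat{\hat M}$ (rather than, for instance, the one-piece closed manifold one obtains from a naive double-disk construction, whose boundary is the larger sphere bundle $S(E_1\oplus E_2\oplus\underline{\mathbb R}^{2})$); and second, to verify that the induced spin$^c$ structures on $\partial W$ and the $K^{0}$-class restricted there match those of the two vector bundle modifications on the nose. The latter reduces fibrewise to the classical spin$^c$-bordism between $S^{k_1+k_2}$ and $S^{k_1}\times S^{k_2}$ carrying their Bott classes, which is a direct consequence of the multiplicativity of the Bott element, equivalently of the Thom isomorphism, under direct sum of spin$^c$ vector bundles.
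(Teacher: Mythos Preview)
Your overall strategy—build an explicit bordism over $M$ with all structure pulled back along the projection—is exactly the paper's. The difference is in the choice of ambient bundle, and your choice creates the very obstacle you flag.

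In $J=S(E_1\oplus\underline{\mathbb R}\oplus E_2\oplus\underline{\mathbb R})$ the equator $\{y_2=0\}$ and the level set $\{|v_1|^2+y_1^2=\tfrac12\}$ intersect (fibrewise in $S^{k_1}\times S^{k_2-1}$), so no codimension-zero region of $J$ has $\hat M_{12}\sqcup(-\hat{\hat M})$ as its boundary. Corner-smoothing cannot repair this: the smoothed boundary of any region cut out by both hypersurfaces is a single connected piece built from portions of each, not the disjoint union of the two full hypersurfaces. So as written the construction does not produce the required bordism.

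The paper sidesteps the problem by working one dimension lower, in the total space of $E_1\oplus E_2\oplus\underline{\mathbb R}$ itself (equivalently, in one hemisphere of your $J$). There the sphere bundle $V=S(E_1\oplus E_2\oplus\underline{\mathbb R})$ (taken of large radius) and a concretely embedded copy of $W\cong S(E_1\oplus\underline{\mathbb R})\times_M S(E_2\oplus\underline{\mathbb R})$ are \emph{disjoint}: $W$ sits strictly inside the disk bundle bounded by $V$, and the region $Z$ obtained from that disk bundle by deleting the open solid torus bounded by $W$ is a genuine manifold with $\partial Z=V\sqcup(-W)$. No corners arise. Your algebraic step on the $K$-classes and your handling of $\iota,\upsilon,\eta$ via pullback along the projection to $M$ then go through essentially as you wrote, now on $Z$. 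If you want to stay inside your $J$, the fix is to restrict to the hemisphere $\{y_2\ge 0\}$ (a disk bundle with boundary $\hat M_{12}$) and embed the torus in its \emph{interior} rather than as the level set $\{r_1=\tfrac12\}$, which spills into both hemispheres.
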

\begin{proof}
 Assume the dimension of the fiber of $E_i$ is $n_i$ and write $\theta_{E_1\oplus E_2}$ and $(\theta_{E_1})_{p_1^{\ast}E_2}$ explicitly as $(V, \iota_V, \upsilon_V, \eta_V, E_V)$ and $(W, \iota_W, \upsilon_W, \eta_W, E_W)$ respectively. Then the fibers of $V$ and $W$ are $S^{n_1+n_2}$ and $S^{n_1}\times S^{n_2}$ respectively. We can embed both of the two bundles over $M$ into the vector bundle $E_1\oplus E_2\oplus \mathbb{R}$ as follows. First we choose a Riemannian metric over $E_1\oplus E_2\oplus \mathbb{R}$ and embed $V$ into $E_1\oplus E_2\oplus \mathbb{R}$ as the standard unit sphere bundle. We embed $W$ into $E_1\oplus E_2\oplus \underline{\mathbb{R}}$ such that in each fiber over $p\in M$ it is embedded as follows:
\begin{equation*}
((x,s), (y, t))\mapsto ((5-t)(x, s), y)
\end{equation*}
in which $x\in (E_1)_p$, $y\in (E_2)_p$ and $s,t \in (\underline{\mathbb{R}})_p$. A careful tells us that this indeed induces an embedding of $W$ into $E_1\oplus E_2\oplus \underline{\mathbb{R}}$ and we still denote the image of the embedding by $W$. We embed $V$ into $E_1\oplus E_2\oplus \underline{\mathbb{R}}$ with a scaling such that the radius of each fiber is 10. Denote the standard disk bundle with radius 10 of $E_1\oplus E_1\oplus \underline{\mathbb{R}}$ by $D^{n_1+n_2+1}(10)$ and the solid torus bundle bounds by $V$ by $\bar{V}$. Then we have that $D^{n_1+n_2+1}(10)-\bar{V}$ (which we denote by $Z$) gives rise to a bordism between $V$ and $W$. And $(Z, \iota\circ p_Z, \upsilon_Z, \eta\circ (p_Z\times id), E_Z)$ gives a bordism between $(V, \iota_V, \upsilon_V, \eta_V, E_V)$ and $(W, \iota_W, \upsilon_W, \eta_W, E_W)$.
\end{proof}
\begin{lemma}\label{extension lemma}
Let $\alpha$ be a representable twist over $X$ and $(M, \iota, \upsilon, \eta, E_M)$ be a geometric cycle over $(X, \alpha)$. Let $(\partial M, \iota_{\partial M}, \\
\upsilon_{\partial M}, \eta_{\partial M}, E_{\partial M})$ be the restriction to the boundary of $M$. If a spin$^c$-vector bundle modification with vector bundle $E$ of $(\partial M, \iota_{\partial M}, \upsilon_{\partial M}, \eta_{\partial M}, E_{\partial M})$ is bordant to trivial cycle, then there exists a spin$^c$-vector bundle $V$ over $\partial M$ such that the spin$^c$-vector bundle modification with $V$ is bordant to the trivial cycle and $V$ can be extended to a spin$^c$-vector bundle over $M$.
\end{lemma}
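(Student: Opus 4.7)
The plan is to construct $V$ in the form $V = E \oplus F$, with $F$ an auxiliary spin$^c$-vector bundle of even rank on $\partial M$ chosen so that $V$ both extends to a spin$^c$-vector bundle on $M$ and makes the modification $(\partial M)_V$ bordant to the trivial cycle. Denote by $(\widehat{W}, \iota_W, \upsilon_W, \eta_W, E_W)$ the given null-bordism of $(\widehat{\partial M}, \ldots)$, where $\widehat{\partial M} = S(E \oplus \underline{\mathbb{R}})$ and $\iota_W \colon \widehat{W} \to Y \subset X$ extends $\iota_{\partial M} \circ p$, with $p \colon \widehat{\partial M} \to \partial M$ the sphere bundle projection. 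By representability of $\alpha$, also fix an oriented real vector bundle $V_0 \to X$ with $W_3(V_0) = [\alpha]$.

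Handling the null-bordism requirement first, take $F = \iota_{\partial M}^{*} F_X$ for some spin$^c$-vector bundle $F_X$ on $X$ of appropriate even rank. By Lemma~\ref{vector bundle modification}, the modification of $\partial M$ by $E \oplus F$ is bordant to the iterated modification $((\partial M)_E)_{p^{*} F}$. Because $F$ is pulled back from $X$, so is $p^{*} F = (\iota_{\partial M} \circ p)^{*} F_X$ over $\widehat{\partial M}$; and since $\iota_W$ extends $\iota_{\partial M} \circ p$, the bundle $p^{*} F$ extends to $\widehat{W}$ as $\iota_W^{*} F_X$. Modifying the null-bordism $\widehat{W}$ by this extension yields a null-bordism of $((\partial M)_E)_{p^{*} F}$, and hence of $(\partial M)_V$. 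So the null-bordism requirement holds for any $F$ of this pullback form.

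The main obstacle, where representability enters decisively, is ensuring that $V = E \oplus \iota_{\partial M}^{*} F_X$ extends to a spin$^c$-vector bundle on $M$. Since $\iota_{\partial M}^{*} F_X$ extends as $\iota^{*} F_X$, this reduces to choosing $F_X$ so that $E$ becomes stably absorbed into a bundle extending from $M$. Representability supplies the canonical spin$^c$-vector bundle $\iota^{*} V_0 \oplus TM$ on $M$ (spin$^c$ because the homotopy $\eta$ gives $W_3(\iota^{*} V_0) = \iota^{*}[\alpha] = W_3(\upsilon) = -W_3(TM)$), whose restriction to $\partial M$ is $\iota_{\partial M}^{*} V_0 \oplus T(\partial M) \oplus \underline{\mathbb{R}}$; this substantially enlarges the pool of extendable spin$^c$-bundles on $\partial M$. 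The key technical step, which I anticipate as the main difficulty, is showing via a stabilization argument (using enough copies of such pullback bundles together with trivial summands) that one can always select $F_X$ making $E \oplus \iota_{\partial M}^{*} F_X$ stably isomorphic to a bundle extending from $M$, while preserving the spin$^c$ structure and even-rank condition needed for vector bundle modification. Once this absorption is in place, $V$ has all the required properties and the lemma follows.
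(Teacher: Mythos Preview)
Your null-bordism half is fine and matches the paper: once you have $F$ pulled back from $X$, modifying the given bordism $\widehat{W}$ by $\iota_W^{*}F_X$ and invoking Lemma~\ref{vector bundle modification} shows $(\partial M)_{E\oplus F}$ is null-bordant. The problem is the extension half, which you correctly flag as the main difficulty but then leave to an unspecified ``stabilization argument''. That argument does not exist in the form you propose: the bundle $E$ is an \emph{arbitrary} spin$^c$-vector bundle on $\partial M$, and adding something pulled back from $X$ (or even $\iota^{*}V_0\oplus TM$ restricted to $\partial M$) gives you no leverage on the obstruction to extending $E$ across $M$. Pullbacks from $X$ already extend, so $E\oplus\iota_{\partial M}^{*}F_X$ extends iff $E$ itself stably extends, and nothing in the hypotheses forces that.

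The paper's proof closes this gap with an idea you are missing: besides $\iota_W^{*}\mathbf V$ it also modifies $\widehat{W}$ by its \emph{own normal bundle} $F$. The point is that $F|_{\widehat{\partial M}}$ is a normal bundle of the sphere bundle $Q=S(E\oplus\underline{\mathbb R})$, hence stably $\pi^{*}\bigl(N(T\partial M)\oplus N(E)\bigr)$. After the iterated-modification Lemma~\ref{vector bundle modification}, the total bundle appearing on $\partial M$ is $E\oplus N(E)\oplus N(T\partial M)\oplus\iota_{\partial M}^{*}\mathbf V$; since $E\oplus N(E)$ is trivial, one is left with $V=N(T\partial M)\oplus\iota_{\partial M}^{*}\mathbf V$ (plus a trivial summand), and this visibly extends to $M$ as $N(TM)\oplus\iota^{*}\mathbf V$. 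Representability is used exactly to make this $V$ spin$^c$. So the crucial move is not stabilizing by pullbacks from $X$, but by the normal bundle of the bordism, which manufactures the complement $N(E)$ needed to annihilate $E$.
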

\begin{proof}
Denote the spin$^c$-vector bundle modification of $(\partial M, \iota_{\partial M}, \upsilon_{\partial M}, \eta_{\partial M}, E_{\partial M})$ with vector bundle $E$ by $(Q, \iota_{\partial M}\circ \pi, \upsilon_{Q}, \eta_{Q}, E_{Q})$, which is bordant to the trivial cycle via a bordism of $(W, \iota_W, \upsilon_W, \eta_{W}, E_{W})$. There exists a normal bundle $F$ over $W$, whose restriction to $Q$ is also a normal bundle of $TQ$. On the other hand, by the construction of spin$^c$-vector bundle modification we can observe that there exists a normal bundle of $TQ$ such that it is isomorphic to the pullback (along $\pi$) of the direct sum of a normal bundle of $T\partial M$ (which we denote by $N(T\partial M)$) and a normal bundle of $E$ (which we denote by $N(E)$). Consider the spin$^c$-modification of $(W, \iota_W, \upsilon_W, \eta_W, E_{W})$ with $F\oplus \iota_W^{\ast}(\mathbf{V})$ (here $\mathbf{V}$ is the vector bundle over $X$ with $W_3(V)=[\alpha]$). It gives a bordism from the spin$^c$-modification of $(Q, \iota_{\partial M}\circ \pi, \upsilon_{Q}, \eta_{Q}, E_{Q})$ with $(F\oplus \iota_W^{\ast}(\mathbf{V}))_{|Q}$ and the trivial cycle. According to Lemma \ref{vector bundle modification} and the observation before we can see that the spin$^c$-modification of $(Q, \iota_{\partial M}\circ \pi, \upsilon_{Q}, \eta_{Q}, E_{Q})$ with $(F\oplus \iota_W^{\ast}(\mathbf{V}))_{|Q}$ is bordant to the spin$^c$-modification of $(\partial M, \iota_{\partial M}, \upsilon_{\partial M}, \eta_{\partial M}, E_{\partial M})$ with $E\oplus N(T \partial M)\oplus N(E)\oplus \iota_W^{\ast}(\mathbf{V})_{|Q}$. While $E\oplus N(E)$ is trivial, we get that the spin$^c$-modification of $(\partial M, \iota_{\partial M}, \upsilon_{\partial M}, \eta_{\partial M}, E_{\partial M})$ with $N(T \partial M)\oplus \iota_{\partial M}^{\ast}(\mathbf{V})$ is bordant to a trivial cycle. The normal bundle on the boundary can be extended to the whole manifold obviously and $\iota_{\partial M}^{\ast}(\mathbf{V})$ can be extended to a vector bundle $\iota_M^{\ast}(\mathbf{V})$  So we get our statement.
\end{proof}
Now we start the proof of Theorem \ref{six term exact sequence}.
\begin{proof}
We will show the exactness at $K^g_0(X, \alpha)$, $K^g_0(X, Y; \alpha)$ and $K^g_1(Y, \alpha)$. The proof of the rest part is similarly.
\begin{itemize}
\item  For any $[y]\in K^g_0(Y, \alpha)$, we choose a twisted geometric $K$-cycle $(M_0, \phi_0, \upsilon_0, \\ \eta_0, E_0)$ to represent it. Its image under $j_{\ast}\circ i_{\ast}$ is $(M_0, i\circ \phi_0, \upsilon_0, \eta_0, E_0)$, which is cobordant to trivial $K$-cycle relative $Y$ in $X$. Therefore we have $j_{\ast}\circ i_{\ast}([y])$ is trivial. Assume that $[x]\in K^g_0(X, \alpha)$ and $j^{\ast}([x])=0$. We still choose a twisted geometric $K$-cycle $(M_1, \phi_1, \upsilon_1, \eta_1, E_1)$ to represent $[x]$. Since $j^{\ast}([x])$ is trivial, we obtain that if we do several times of spin$^c$-vector bundle modification for $(M_1, j\circ \phi_1, \upsilon_1, \eta_1, E_1)$ relative to $Y$ in $X$ i.e if we denote the results of $Spin^c$-vector bundle modifications by $(\hat{M_1}, j\circ \phi_1\circ \rho, \upsilon_1', \eta_1', E_1')$, then $(\hat{M_1}, j\circ \phi_1\circ \rho, \upsilon_1', \eta_1', E_1')$ satisfies that $(j\circ \phi_1\circ \rho(\hat{M_1}))\subset Y$, which also implies that $j\circ \phi_1(M)\subset Y$. Therefore we get that $[x]\in$ im $i_{\ast}$.
\item First of all we need to point out that $\delta$ is well defined i.e it is compatible with disjoint union, bordism and spin$^c$-vector bundle modification. It is a tedious check from the definition of $\delta$, which we skip here.  By the definition of $\delta$ we can see that $\delta\circ j_{\ast}=0$. To show that $ker \delta\subset im j_{\ast}$, we choose a twisted geometric $K$-cycle $(M_2, \phi_2, \upsilon_2, \eta_2, E_2)$ such that $\delta[(M_2, \phi_2, \upsilon_2, \eta_2, E_2)]$ is trivial i.e several times of spin$^c$-vector bundle modification for $(\partial M_2, \phi|_{\partial M_2}, \upsilon|_{\partial M_2}, \eta|_{\partial M_2}, E|_{\partial M_2})$ is cobordant to trivial $K$-cycle over $Y$. By Lemma \ref{extension lemma} we can assume that each spin$^c$-vector bundle over the boundary of a manifold can be extended to a spin$^c$-vector bundle over the whole manifold, therefore we get that if we do the spin$^c$-vector bundle modifications for $(M_2, \phi_2, \upsilon_2, \eta_2, E_2)$, then the resulting twisted spin$^c$-manifold is bordant to a twisted spin$^c$-manifold without boundary over $X$. Finally we obtain that $(M_2, \phi_2, \upsilon_2, \eta_2, E_2)$ is equivalent to a twisted geometric $K$-cycle whose underling twisted spin$^c$-manifold is closed, which implies that $[(M_2, \phi_2, \upsilon_2, \eta_2, \\
  E_2)]$ lies in the image of $j_{\ast}$.
\item Let $(M_3, \phi_3, \upsilon_3, \eta_3, E_3)$ be a geometric $\alpha$-twisted $K$-cycle over $(X, Y; \alpha)$. Then $[(\partial M_3, \phi_3|_{\partial M_3}, \upsilon_3|_{\partial M_3}, \eta_3|_{\partial M_3}, E_3|_{\partial M_3})$ is clearly bordant to a trivial twisted geometric $K$-cycle over $X$ i.e $i_{\ast}\circ \delta=0$. Let $[(M_4, \phi_4, \upsilon_4, \eta_4, E_4)] \in K^g_1(Y, \alpha\circ i)$ be a class which lies in the kernel of  $i_{\ast}$. A similar strategy leads us to get that the underling twisted spin$^c$-manifold of several times of spin$^c$-vector bundle modification of $[(M_4, \phi_4, \upsilon_4, \eta_4, E_4)]$ is a boundary of a twisted spin$^c$-manifold over $X$ , from which we can easily get that $[(M_4, \phi_4, \upsilon_4, \eta_4, E_4)]$ belongs to the image of $\delta$.
\end{itemize}
\end{proof}
\begin{remark}
The condition of representability of the twist is essential for the proof here. In general, a twist is not always representable. We leave the six-term exact sequence of geometric twisted $K$-homology for general twists as a further question to be investigated.
\end{remark}
\begin{theorem}[\textbf{Additivity}]
Let $(X_i)_{i\in I}$ be a family of locally finite $CW$-complexes and $\alpha_i: X_i\rightarrow K(\mathbb{Z}, 3)$ be a twist over $X_i$ for each $i$. Moreover, we require that there exists an $\alpha_i$-twisted spin$^c$-manifold over $X_i$ for each $i$. Denote $X$ to be the disjoint union of $X_i$ and $\alpha$ is a twist over $X$ such that the restriction of $X$ to each $X_i$ is $\alpha_i$. Then we have the following isomorphism:
\begin{equation}
K^g_{\ast}(X, \alpha)\cong \oplus_i K^g_{\ast}(X_i, \alpha_i)
\end{equation}
\end{theorem}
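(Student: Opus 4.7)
The plan is to exhibit mutually inverse homomorphisms between $K^g_\ast(X,\alpha)$ and $\bigoplus_i K^g_\ast(X_i,\alpha_i)$. In one direction, each inclusion $\iota_i\colon X_i \hookrightarrow X$ satisfies $\alpha \circ \iota_i = \alpha_i$ by hypothesis, so Lemma \ref{induced map} supplies homomorphisms $(\iota_i)_\ast\colon K^g_\ast(X_i,\alpha_i) \to K^g_\ast(X,\alpha)$, and these assemble into a map $\Phi$ on the direct sum (which makes sense since every element of the direct sum has only finitely many nonzero components).

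In the opposite direction, I would use the fact that each $X_i$ is clopen in the disjoint union $X$. Given a geometric cycle $(M,\phi,\upsilon,\eta,E)$ over $(X,\alpha)$, the preimages $M_i := \phi^{-1}(X_i)$ are clopen in $M$, hence unions of connected components of $M$. Compactness of $M$ forces all but finitely many $M_i$ to be empty. I would then set
$$\Psi\bigl([M,\phi,\upsilon,\eta,E]\bigr) := \sum_i \bigl[M_i,\,\phi|_{M_i},\,\upsilon|_{M_i},\,\eta|_{M_i \times I},\,E|_{M_i}\bigr],$$
where each summand is a geometric cycle over $(X_i,\alpha_i)$ because $\alpha|_{X_i} = \alpha_i$ makes $\eta|_{M_i \times I}$ a valid homotopy between $W_3 \circ \upsilon|_{M_i}$ and $\alpha_i \circ \phi|_{M_i}$. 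The hypothesis that each $X_i$ admits an $\alpha_i$-twisted spin$^c$-manifold ensures $K^g_\ast(X_i,\alpha_i)$ is a genuine group, so empty components contribute the zero element without ambiguity.

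The bulk of the work is verifying that $\Psi$ respects the three equivalence relations generating $\sim$. Disjoint union is immediate, since decomposing by the $X_i$ commutes with direct sum of coefficient bundles on a common base. For bordism, a witnessing manifold $(W,\iota_W,\upsilon_W,\eta_W,E_W)$ decomposes via $W_i := \iota_W^{-1}(X_i)$, which are clopen in $W$ and provide a bordism in each $K^g_\ast(X_i,\alpha_i)$ factor separately. For spin$^c$-vector bundle modification, the sphere bundle construction $\hat M = S(V \oplus \underline{\mathbb{R}})$ is fiberwise over $M$, so it commutes with restriction to the clopen subsets $M_i$, sending a modification of $(M,\phi,\upsilon,\eta,E)$ to the sum of the modifications of the pieces.

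Finally, $\Phi \circ \Psi = \mathrm{id}$ follows from the disjoint union relation in $K^g_\ast(X,\alpha)$: reassembling the decomposed pieces via $(\iota_i)_\ast$ reproduces the original cycle because the $M_i$ partition $M$. Conversely, $\Psi \circ \Phi$ restricted to the $i$-th summand is the identity, since any cycle in the image of $(\iota_i)_\ast$ is entirely supported in $X_i$ and hence has $M_j = \emptyset$ for $j \neq i$. I expect the main bookkeeping obstacle to lie in the bordism case, specifically in checking that the decomposition $W = \sqcup_i W_i$ continues to give valid $\alpha_i$-twisted spin$^c$-manifolds with boundary after restricting all of $\iota_W, \upsilon_W, \eta_W$, and $E_W$; but since $X_i$ is clopen in $X$ every restriction is smooth and the twisting data restrict cleanly, so the check is routine.
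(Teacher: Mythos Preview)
Your argument is correct and is precisely the ``straight from the definition'' verification the paper alludes to; the paper does not actually write out a proof, merely noting it is not difficult and omitting it. Your decomposition via the clopen preimages $M_i=\phi^{-1}(X_i)$, together with the routine compatibility checks for disjoint union, bordism, and spin$^c$ vector bundle modification, is exactly the intended mechanism.
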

The proof is not difficult from the definition, so we skip it here for simplicity.
The above theorem implies the Mayer-Vietoris sequence and the Milnor's $\underleftarrow{lim}^1$-exact sequence of geometric twisted $K$-homology.
\begin{theorem}[\textbf{Mayer-Vietoris sequence}]\label{MV sequence}
Assume two open set $U$ and $V$ of $X$ satisfies $X=U\bigcup V$ and the twist $\alpha$ is representable, we have the Mayer-Vietoris sequence of twisted $K$-homology:
\begin{equation*}
\begin{tikzpicture}
[shorten >=1pt,node distance=1.1cm,auto]
\node[]  (X_0)  {$K_1^{g}(X,\alpha)$};
\node[]  (X_1)  [right= of X_0] {$K_0^{g}(U\bigcap V, \alpha_{U\bigcap V})$};
\node[]  (X_2)[right=of X_1] {$K_0^{g}(U,\alpha_U)\oplus K_0^{g}(V,\alpha_V)$};
\node[]  (Y_0)[below=of X_0] {$K_1^{g}(U,\alpha_U)\oplus K_1^{g}(V,\alpha_V)$};
\node[]  (Y_1) [right=of Y_0] {$K_1^{g}(U\bigcap V, \alpha_{U \bigcap V})$};
\node[]  (Y_2) [right=of Y_1]  {$K_0^{g}(X,\alpha)$};
\path[->]
(X_0) edge[above] node {$\delta$} (X_1)
(X_1) edge[above] node {${j_U}_{\ast}\oplus {j_V}_{\ast}$}  (X_2)
(X_2) edge[right] node {${i_U}_{\ast}-{i_V}_{\ast}$} (Y_2)
(Y_2) edge[below] node {$\delta$} (Y_1)
(Y_1) edge[below] node {${j_U}_{\ast} \oplus {j_V}_{\ast}$}  (Y_0)
(Y_0) edge[left]  node {${i_U}_{\ast}-{i_V}_{\ast}$} (X_0);
\end{tikzpicture}
\end{equation*}
\end{theorem}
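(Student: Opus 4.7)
The plan is to derive the Mayer-Vietoris sequence by splicing together the six-term exact sequences of the pairs $(X, U)$ and $(V, U \cap V)$ via the excision isomorphism, following the standard Barratt-Whitehead pattern used for any Eilenberg-Steenrod homology theory.

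First, I would apply Theorem \ref{six term exact sequence} twice: once to the pair $(X, U)$ and once to the pair $(V, U \cap V)$. Both applications require the relevant twists to be representable, which is automatic: if $\mathbf{V} \to X$ is an oriented real vector bundle with $W_3(\mathbf{V}) = [\alpha]$, then its restrictions to $U$, $V$ and $U \cap V$ represent the restricted twists. This produces two six-term exact sequences relating the absolute groups $K^g_{\ast}(U,\alpha_U)$, $K^g_{\ast}(V,\alpha_V)$, $K^g_{\ast}(U\cap V,\alpha_{U\cap V})$ and $K^g_{\ast}(X,\alpha)$ to the relative groups $K^g_{\ast}(X, U; \alpha)$ and $K^g_{\ast}(V, U \cap V; \alpha_V)$.

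Second, I would apply Theorem \ref{Excision} to identify these two relative groups. Since $U$ and $V$ are open with $U \cup V = X$, the complement $X \setminus V$ is closed and contained in $U$, so $\overline{X\setminus V} \subset U$. Theorem \ref{Excision} then supplies a natural isomorphism $K^g_{\ast}(V, U \cap V; \alpha_V) \xrightarrow{\cong} K^g_{\ast}(X, U; \alpha)$ induced by the inclusion $(V, U\cap V) \hookrightarrow (X, U)$. Once this isomorphism is in place, I would run the standard diagram chase on the commutative ladder formed by the two six-term sequences: define the Mayer-Vietoris boundary $\delta$ as the composition of the connecting map out of $K^g_n(X, \alpha)$ in the long exact sequence of $(X, U)$ with the inverse excision isomorphism and the connecting map $K^g_n(V, U \cap V; \alpha_V) \to K^g_{n-1}(U \cap V, \alpha_{U\cap V})$; the horizontal maps ${j_U}_{\ast} \oplus {j_V}_{\ast}$ and ${i_U}_{\ast} - {i_V}_{\ast}$ are the evident diagonal inclusion and difference of inclusions. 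Exactness at the six nodes of the cyclic sequence is a formal consequence (Barratt-Whitehead lemma).

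The main obstacle I anticipate is the excision step itself: Theorem \ref{Excision} as written excises an \emph{open} set whose closure lies inside the subspace, whereas in the Mayer-Vietoris setup the set to be excised, $X \setminus V$, is closed rather than open. Bridging this in the CW setting is standard but needs to be made explicit, for instance by enlarging $X \setminus V$ to an open neighbourhood whose closure still sits inside $U$ (which exists because $X$ is a locally finite CW-complex and $X\setminus V$ is closed inside the open set $U$) and observing that removing the extra collar does not change the geometric twisted $K$-homology, by the homotopy invariance supplied by Theorem \ref{homotopy}. Aside from this technicality and the routine verification that the connecting and inclusion-induced maps assemble into a commutative ladder, the proof is entirely formal once the six-term sequence and excision are in hand.
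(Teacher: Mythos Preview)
Your proposal is correct and follows the same overall strategy as the paper: derive Mayer--Vietoris formally from the six-term exact sequence (Theorem~\ref{six term exact sequence}) together with excision (Theorem~\ref{Excision}). The only difference is in packaging: the paper applies the six-term sequence once to the single pair $(Z,Y)=(U\sqcup V,\,U\cap V)$ and then invokes an excision-type identification $K^g_{\ast}(Z,Y;\alpha)\cong K^g_{\ast}(X,\alpha)$, whereas you run the more explicit Barratt--Whitehead ladder on the pairs $(X,U)$ and $(V,U\cap V)$ linked by the excision isomorphism $K^g_{\ast}(V,U\cap V;\alpha_V)\cong K^g_{\ast}(X,U;\alpha)$. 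Your version has the advantage of making every step transparent (including the open-versus-closed excision issue you flag, which is handled exactly as you suggest via a collar and Theorem~\ref{homotopy}); the paper's one-line version is terser but ultimately unpacks to the same diagram chase.
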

\begin{proof}
Let $Z$ be the disjoint union of $U$ and $V$, $Y$ be $U\cap V$. Then consider the six-term exact sequence for the pair $(Z, Y)$ and use the excision isomorphism $K^g_{\ast}(Z, Y; \alpha)\cong K^g_{\ast}(X, \alpha)$ we can get the Mayer-Vietoris sequence for twisted geometric $K$-homology groups.
\end{proof}
\begin{theorem}[$\underleftarrow{lim}^1$-\textbf{exact sequence}]\label{Milnor sequence}
Given a countable $CW$-complex $X$ and a representable twist $\alpha$ over $X$ such that there exists an $\alpha$-twisted spin$^c$-manifold over $X$. We denote the $n$-skeleton of $X$ to be $X_n$ and the inclusion $X_n\hookrightarrow X$ to be $i_n$. Let $\alpha_n= \alpha\circ i_n$. Then we have the following exact sequence
\begin{equation}
1\rightarrow \underleftarrow{lim}^1 K^g_{\ast-1}(X_n, \alpha_n)\rightarrow K^g_{\ast}(X, \alpha)\rightarrow \underleftarrow{lim}K^g_{\ast}(X_n, \alpha_n)\rightarrow 1
\end{equation}
\end{theorem}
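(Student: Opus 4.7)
The proof will follow the classical Milnor strategy of applying Mayer--Vietoris to a mapping telescope decomposition of $X$. The plan is to translate the telescope argument into the present setting, using the properties of geometric twisted $K$-homology established in Sections~4 (homotopy invariance, excision, Mayer--Vietoris, and additivity).

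First I would form the infinite mapping telescope
\[
T \;=\; \bigsqcup_{n\geq 0} X_n\times[n,n+1]\,\Big/\sim,
\]
where $(x,n+1)\in X_n\times[n,n+1]$ is glued to $(i_n(x),n+1)\in X_{n+1}\times[n+1,n+2]$. The canonical projection $p\colon T\to X$ is a homotopy equivalence because $X$ is a countable $CW$-complex and the skeletal filtration is a sequence of cofibrations; by Theorem~\ref{homotopy} we obtain $K^g_\ast(T,\alpha\circ p)\cong K^g_\ast(X,\alpha)$. Note that $\alpha\circ p$ is again representable: if $V\to X$ is an oriented real vector bundle with $W_3(V)=[\alpha]$, then $p^\ast V$ represents $\alpha\circ p$, and its restrictions to the open subsets described below stay representable.

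Next I would decompose $T=U\cup V$ into the two open subsets
\[
U \;=\; \bigsqcup_{k\geq 0} X_{2k}\times(2k-\tfrac{1}{3},\,2k+1+\tfrac{1}{3}),\qquad
V \;=\; \bigsqcup_{k\geq 0} X_{2k+1}\times(2k+\tfrac{2}{3},\,2k+2+\tfrac{2}{3}),
\]
each a disjoint union of open mapping cylinders that deformation retracts onto a corresponding disjoint union of skeleta, while $U\cap V$ is a disjoint union of open collars retracting onto the individual $X_n$. Using homotopy invariance together with the additivity theorem, I get
\[
K^g_\ast(U,\alpha|_U)\oplus K^g_\ast(V,\alpha|_V)\;\cong\; \bigoplus_{n\geq 0} K^g_\ast(X_n,\alpha_n),\qquad
K^g_\ast(U\cap V,\alpha|_{U\cap V})\;\cong\; \bigoplus_{n\geq 0} K^g_\ast(X_n,\alpha_n).
\]
Chasing the identifications shows that the Mayer--Vietoris comparison map
$\bigoplus K^g_\ast(X_n,\alpha_n)\to\bigoplus K^g_\ast(X_n,\alpha_n)$ equals
$1-\mathrm{shift}$, where $\mathrm{shift}$ is induced by $(i_n)_\ast$.

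Finally, I would feed this into the Mayer--Vietoris six-term exact sequence (Theorem~\ref{MV sequence}) applied to $(T,U,V)$, which becomes the familiar short exact sequence extracted from the long exact sequence of $1-\mathrm{shift}$: the cokernel of $1-\mathrm{shift}$ on $\bigoplus K^g_\ast(X_n,\alpha_n)$ is by definition $\underleftarrow{\lim}\,K^g_\ast(X_n,\alpha_n)$, and the kernel on the degree-shifted group is $\underleftarrow{\lim}{}^1\,K^g_{\ast-1}(X_n,\alpha_n)$. Splicing these together yields the desired short exact sequence.

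The main obstacle I expect is verifying that Mayer--Vietoris is legitimately applicable here. Two technical checks are required: (i) that the twist $\alpha\circ p$ and its restrictions to $U$, $V$, $U\cap V$ remain representable (handled by pulling back the representing vector bundle), and (ii) that additivity can be invoked for the countably infinite disjoint unions, which needs the hypothesis that an $\alpha_n$-twisted $\mathrm{spin}^c$-manifold exists over each $X_n$, as in the statement of the Additivity theorem. The latter follows from the representability assumption because the unit sphere bundle of $V\oplus\underline{\mathbb{R}}$ (or any suitable embedding) provides such a manifold after pulling back along $i_n$. Once these points are in hand, the extraction of $\underleftarrow{\lim}$ and $\underleftarrow{\lim}{}^1$ from the six-term sequence is purely formal.
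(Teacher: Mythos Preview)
Your proposal is correct and takes essentially the same approach as the paper: the paper does not give a detailed argument but simply declares the proof ``standard and the same as the proof of Milnor's $\underleftarrow{\lim}{}^1$-sequence of homology theory'' and points to Chapter~19 of May. What you have written is precisely that telescope/Mayer--Vietoris argument, spelled out carefully for the twisted setting and with the representability and additivity hypotheses checked.
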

The proof is standard and the same as the proof of Milnor's $\underleftarrow{lim}^1$-sequence of homology theory, which can be found in chapter 19 of \cite{May}.

\section{The charge map is an isomorphism}
In Section $8$ of \cite{BCW} they propose a question:
\begin{center}
Is the charge map $h$ (see \eqref{charge map}) an isomorphism?
\end{center}
In this section we show this is true for countable $CW$-complexes by considering the following diagram.
\begin{equation}\label{diagram h}
\begin{tikzpicture}
               [
                    execute at begin node = \(,
                    execute at end node = \),
                    inner sep = .3333em,
                  ]
                  \matrix (m) {
                   |(i)|K^{geo}(X, \mathcal{A}) \#  \#  |(l)| K^{top}(X, \mathcal{A})  \\
                   |(j)|K^g(X, \alpha) \#  \#  |(k)|  K^a(X, \alpha)  \\
                  };
                  \path[->]
                  (i)edge node[auto]{F} (j)
                  (i)edge node[auto]{h}(l)
                  (j)edge node[auto]{\mu}(k)
                  (l)edge node[auto]{\eta}(k) ;
              \end{tikzpicture}
\end{equation}
Here $\mu$ is the analytic index map in \cite{BLW} and $\eta: K^{top}_{\ast}(X, \mathcal{A})\rightarrow K^a_{\ast}(X, \mathcal{A})$ is the natural map in \cite{BCW} , which is defined as follows:
\begin{equation}
\eta(M, \phi, \sigma)= \phi_{\ast}(PD(\sigma))
\end{equation}
Moreover, we know that $\mu$ is an isomorphisms for compact manifolds and $\eta$ is an isomorphism for locally finite $CW$-complexes. And we proved that $F$ is an isomorphism for any locally finite $CW$-complexes. If we can show that diagram \eqref{diagram h} is commutative, then we will get that $h$ must also be an isomorphism.
\begin{proposition}
For any locally finite $CW$-complex $X$, the diagram \eqref{diagram h} is commutative.
\end{proposition}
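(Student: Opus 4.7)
The plan is to verify commutativity of \eqref{diagram h} directly on representatives. Fix a $D$-cycle $x = (M, E, \phi, S)$ and unfold both $\mu \circ F(x)$ and $\eta \circ h(x)$ as classes in $K^a_\ast(X, \alpha)$. By the construction in the proof of the main theorem of Section $3$, $F(x) = (M, \phi, \upsilon, \tilde{\eta}, [E])$, where the homotopy $\tilde{\eta}$ between $W_3 \circ \upsilon$ and $\alpha \circ \phi$ is the one produced from $S$ via Lemma \ref{canonical spinor bundle} and the gluing construction preceding that theorem. Hence
\begin{equation*}
\mu \circ F(x) \;=\; \phi_{\ast} \circ \tilde{\eta}_{\ast} \circ I^{\ast} \circ PD([E]).
\end{equation*}
On the other side, unwinding the charge map gives $h(x) = (\hat{M}, \phi \circ \pi, \chi \circ s_!([E]))$, so
\begin{equation*}
\eta \circ h(x) \;=\; (\phi \circ \pi)_{\ast}\bigl(PD(\chi \circ s_!([E]))\bigr) \;=\; \phi_{\ast} \circ \pi_{\ast} \circ PD \circ \chi \circ s_!([E]).
\end{equation*}

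The heart of the argument is therefore to establish the identity
\begin{equation*}
\pi_{\ast} \circ PD \circ \chi \circ s_! \;=\; \tilde{\eta}_{\ast} \circ I^{\ast} \circ PD
\end{equation*}
as homomorphisms $K^{0}(M) \to K^{a}_{n}(M, \alpha \circ \phi)$. The key observation is that $s_!$ is the Thom isomorphism along the normal bundle $\upsilon$, and pushforward $\pi_{\ast}$ along the sphere bundle $\hat{M} \to M$ inverts this construction at the level of twisted $K$-homology; this is the analytic counterpart of the statement that spin$^{c}$-vector bundle modification does not change the class of a geometric cycle. Meanwhile the isomorphism $\chi$ is built precisely from the stable trivialisation $TM \oplus \upsilon \simeq \underline{\mathbb{R}}^{N}$ together with the spinor bundle $S$, and these are exactly the data used in the construction of $\tilde{\eta}$ together with the canonical anti-isomorphism $I$. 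Assembling these pieces shows that both compositions implement the twisted Poincar\'e duality of $M$ (for the twist $\alpha \circ \phi$) applied to $[E]$, followed by the pushforward along $\phi$, and hence agree.

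The main obstacle will be the careful bookkeeping of twists and homotopies: the spinor bundle $S$ enters in two apparently different guises, through $\chi$ on the topological side and through the homotopy $\tilde{\eta}$ on the geometric side. Both, however, originate from the same chosen isomorphism $\mathcal{K}(S) \cong \mathrm{Cliff}^{+}_{\mathbb{C}}(TM) \otimes \phi^{\ast}\mathcal{A}^{op}$ built into the $D$-cycle, so they must induce the same isomorphism on twisted $K$-groups. To make this precise one needs to trace through the construction in Section $3$ and verify that the $\mathcal{K}$-bundle over $M \times I$ produced from $S$ (used to define $\tilde{\eta}$) restricts at the two endpoints to the $\mathcal{K}$-bundles used in defining $\chi$ and $I$, up to the same isomorphisms. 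Once this compatibility is in place, the naturality of the analytic pushforward along $\phi$ and of Poincar\'e duality close the diagram, proving the proposition.
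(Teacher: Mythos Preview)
Your proposal is correct and follows essentially the same strategy as the paper: unfold both compositions on a $D$-cycle, reduce to the identity $\pi_{\ast}\circ PD\circ \chi\circ s_{!}=\tilde{\eta}_{\ast}\circ I^{\ast}\circ PD$, and then argue that (i) $s_{!}$ and $\pi_{\ast}$ are related through naturality of Poincar\'e duality (the paper records this as $PD=\rho_{\ast}\circ PD\circ s_{!}$ via $\rho_{\ast}\circ s_{\ast}=\mathrm{id}$), and (ii) $\chi$ on the topological side and $\tilde{\eta}_{\ast}\circ I^{\ast}$ on the geometric side are induced by the very same data, namely the stable trivialisation of $TM\oplus\upsilon$ and the spinor bundle $S$. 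The paper packages this into two explicit commutative squares (its Lemma~\ref{commutative lemma}) and invokes the $KK$-theoretic description of twisted Poincar\'e duality from \cite{EEK} and the naturality result of Connes--Skandalis, whereas you describe the same content more discursively; but there is no substantive difference in approach.
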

\begin{proof}
If we write the formula of the map in diagram \eqref{diagram h} for a $D$-cycle $(M, E, \iota, S)$ over $(X, \alpha)$ using the notation before, we get
\begin{eqnarray}
\eta\circ h(M, E, \iota, S)=\iota_{\ast}\circ \rho_{\ast}\circ PD\circ \chi\circ s_!([E])\\
\mu\circ F(M, E, \iota, S)= \iota_{\ast}\circ \eta_{\ast}\circ I_{\ast}\circ PD([E])
\end{eqnarray}
Therefore the commutativity of diagram \eqref{diagram h} is equivalent to
\begin{equation}
\iota_{\ast}\circ \rho_{\ast}\circ PD\circ \chi\circ s_{!}= \iota_{\ast}\circ \eta_{\ast}\circ I_{\ast}\circ PD
\end{equation}
i.e equivalent to the commutativity of the following diagram:
\begin{equation}\label{diagram h1}
\begin{tikzpicture}
               [
                    execute at begin node = \(,
                    execute at end node = \),
                    inner sep = .3333em,
                  ]
                  \matrix (m) {
                   |(i)|K^0(E) \#  \#  |(l)| K_{\ast}(M, W_3\circ \tau)  \\
                   |(j)|K^{\ast}(\hat{M}, W_3\circ \upsilon \circ \varrho)) \#  \#  |(k)|  K_{\ast}(M, W_3\circ \upsilon)  \\
                   |(p)|K^{\ast}(\hat{M}, -\alpha\circ \iota\circ \rho) \#  \#  |(q)| \\
                   |(s)|K_{\ast}(\hat{M}, \alpha\circ \iota\circ \rho) \#  \# |(t)| K_{\ast}(M, \alpha\circ \iota)\\
                  };
                  \path[->]
                  (i)edge node[auto]{s_!}(j)
                  (i)edge node[auto]{PD}(l)
                  (j)edge node[auto]{\chi}(p)
                  (p)edge node[auto]{PD}(s)
                  (l)edge node[auto]{I_{\ast}} (k)
                  (k)edge node[auto]{\eta_{\ast}} (t)
                  (s)edge node[auto]{\rho_{\ast}} (t) ;
              \end{tikzpicture}
\end{equation}
Here $PD: K^{\ast}(\hat{M}, (\iota\circ \rho)^{\ast}(\mathcal{A}^{op})\rightarrow K_{\ast}(\hat{M}, (\iota\circ \rho)(\mathcal{A}))$ is the $Poincar\acute{e}$ duality map between twisted $K$-theory and twisted $K$-homology in \cite{EEK} and \cite{Tu}.
By the naturality of Poincar$\check{e}$ duality (one can see Corollary $3.8$ in \cite{CS}), we have that the commutative diagram below
\begin{equation}\label{diagram h3}
\begin{tikzpicture}
               [
                    execute at begin node = \(,
                    execute at end node = \),
                    inner sep = .3333em,
                  ]
                  \matrix (m) {
                   |(i)|K^0(M) \#  \#  |(l)| K_{\ast}(M, W_3\circ \tau)  \\
                   |(j)|K^0(\hat{M}, W_3\circ \upsilon\circ \rho) \#  \#  |(k)|  K_{\ast}(\hat{M}, W_3\circ \tau\circ \rho)  \\
                  };
                  \draw[transform canvas={xshift=0.5ex},->] (l) edge node[right]{s_{\ast}}(k);
                  \draw[transform canvas={xshift=-0.5ex},->](k) edge node[left]{\rho_{\ast}} (l);
                  \path[->]
                  (i)edge node[auto]{s_!} (j)
                  (i)edge node[auto]{PD}(l)
                  (j)edge node[auto]{PD}(k) ;
              \end{tikzpicture}
\end{equation}
The twist of the lower right $K$-homology group is $W_3\circ \tau\circ \rho$ since $\hat{M}$ admits a spin$^c$-structure.
Since $\rho\circ s=Id$, we have $\rho_{\ast}\circ s_{\ast}=id$, therefore we can get
\begin{equation}\label{diagram equation h1}
PD=\rho_{\ast}\circ s_{\ast}\circ PD= \rho_{\ast}\circ PD\circ s_!
\end{equation}
To prove the whole proposition, we first give the following lemma.
\begin{lemma}\label{commutative lemma}
Denote the map on twisted $K$-homology groups induced by changing twist by $\tilde{\chi}: K_{\ast}(\hat{M}, W_3\circ \tau\circ \rho)\rightarrow K_{\ast}(\hat{M}, \alpha\circ \iota\circ \rho)$. Then the following diagram is commutative
\begin{equation}\label{diagram h2}
\begin{tikzpicture}
               [
                    execute at begin node = \(,
                    execute at end node = \),
                    inner sep = .3333em,
                  ]
                  \matrix (m) {
                   |(i)|K^{\ast}(\hat{M}, W_3\circ \upsilon\circ \rho) \#   |(l)| K_{\ast}(\hat{M}, W_3\circ \tau\circ \rho) \# |(p)|K_{\ast}(M, W_3\circ \tau)  \\
                   |(j)|K^{\ast}(\hat{M}, -\alpha\circ \iota\circ \rho) \#   |(k)| K_{\ast}(\hat{M}, \alpha\circ \iota\circ\rho) \#  |(q)|K_{\ast}(M, \alpha\circ \iota)  \\
                  };
                  \path[->]
                  (i)edge node[auto]{\chi} (j)
                  (i)edge node[auto]{PD}(l)
                  (j)edge node[auto]{PD}(k)
                  (l)edge node[auto]{\tilde{\chi}}(k)
                  (l)edge node[auto]{\rho_{\ast}} (p)
                  (k)edge node[auto]{\rho_{\ast}} (q)
                  (p)edge node[auto]{\eta_{\ast}\circ I_{\ast}}(q) ;
              \end{tikzpicture}
\end{equation}
\end{lemma}
If we combine the above lemma and diagram \eqref{diagram h3}, we can get that diagram \eqref{diagram h} is commutative.
\end{proof}
\begin{corollary}
If $X$ is an smooth manifold and $\mathcal{A}$ is a twisting on $X$, then the charge map $h: K^{geo}_{\ast}(X, \alpha)\rightarrow K^{top}_{\ast}(X, \alpha)$ is an isomorphism.
\end{corollary}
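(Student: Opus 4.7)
The corollary is a direct diagram-chase consequence of the proposition just proved together with the three isomorphism results already established earlier in the paper. My plan is to simply assemble these pieces.

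First, I would record the three inputs. By the main theorem of Section 3, the map $F\colon K^{geo}_{\ast}(X,\mathcal{A})\to K^g_{\ast}(X,\alpha)$ is an isomorphism for every locally finite CW-complex, and a smooth manifold is in particular such a space. By Theorem \ref{Analytic index } of Section 2, the assembly map $\mu\colon K^g_{\ast}(X,\alpha)\to K^a_{\ast}(X,\alpha)$ is an isomorphism for any smooth manifold with a twisting $\alpha$. Finally, the map $\eta\colon K^{top}_{\ast}(X,\mathcal{A})\to K^a_{\ast}(X,\mathcal{A})$ of \cite{BCW} is an isomorphism on locally finite CW-complexes. These three facts are stated explicitly above the diagram \eqref{diagram h}.

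Next, I would invoke the preceding proposition, which asserts that the square \eqref{diagram h} commutes, i.e.
\begin{equation*}
\eta\circ h = \mu\circ F
\end{equation*}
as maps $K^{geo}_{\ast}(X,\mathcal{A})\to K^a_{\ast}(X,\alpha)$. Because $\eta$ is invertible under the hypotheses of the corollary, this identity can be rearranged to express the charge map as
\begin{equation*}
h = \eta^{-1}\circ \mu\circ F,
\end{equation*}
which is a composition of three isomorphisms and hence itself an isomorphism. This gives the corollary with essentially no further work.

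There is no real obstacle here, since the difficult content has been discharged upstream: the equivalence $F$ (Section 3), the analytic-index isomorphism $\mu$ (imported from \cite{BLW}), the isomorphism $\eta$ (imported from \cite{BCW}), and, most importantly, the commutativity of \eqref{diagram h} verified in the preceding proposition via the Poincaré-duality naturality diagrams \eqref{diagram h3} and Lemma \ref{commutative lemma}. The only point worth mentioning explicitly in the write-up is that a smooth manifold satisfies the hypotheses needed for all three of $F$, $\mu$ and $\eta$ to be isomorphisms simultaneously, so that the inversion of $\eta$ in the final step is legitimate.
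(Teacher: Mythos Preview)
Your argument is correct and is exactly the approach the paper intends: the corollary is stated immediately after the proposition and is meant to follow, as you do, by inverting $\eta$ in the commutative square \eqref{diagram h} and using that $F$, $\mu$, and $\eta$ are already known to be isomorphisms under the stated hypotheses. The paper does not spell this out beyond the sentence preceding the proposition (``If we can show that diagram \eqref{diagram h} is commutative, then we will get that $h$ must also be an isomorphism''), so your write-up simply makes that reasoning explicit.
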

\begin{proof}[Proof of \eqref{commutative lemma}]
\par
\begin{itemize}
  \item \textbf{Step 1}
  We prove the first square in diagram (\eqref{diagram h2}) is commutative. First of all, we review the definition of Poincar$\acute{e}$ duality $PD: K^{\ast}(\hat{M}, \mathcal{A})\rightarrow K_{\ast}(\hat{M}, \mathcal{A}^{op})$ for twisted $K$-theory in Lemma $2.1$ of \cite{EEK}:
  \begin{equation}\label{Poincare duality}
  PD(x)=\sigma_{C(\hat{M}, \mathcal{A}^{op})}(x)
  \end{equation}
  Here $\sigma_{C(\hat{M}, \mathcal{A}^{op})}: KK(C(\hat{M}, \mathcal{A})\hat{\otimes}A, B)\rightarrow KK(A, C(\hat{M}, \mathcal{A}^{op})\hat{\otimes}B)$ is a canonical isomorphism for any $C^{\ast}$-algebra $A$ and $B$, which is given by tensoring with $C(\hat{M}, \mathcal{A}^{op})$. If we choose $A$ and $B$ both to be $\mathbb{C}$ and $C(\hat{M}, \mathcal{A})$ to be $C(\hat{M}, (W_3\circ \upsilon\circ \rho)^{\ast}(\mathfrak{K}))$, then we get the Poincar$\acute{e}$ duality $PD$ on the top of the first square . If we choose $A$ and $B$ to be $\mathbb{C}$ and $C(\hat{M}, \mathcal{A})$ to be $C(\hat{M}, (-\alpha\circ \iota\circ \rho)^{\ast}(\mathfrak{K}))$, then we get the Poincar$\acute{e}$ duality $PD$ on the bottom of the first square. Then the commutativity of the first square follows from that $PD$ is natural over $C(\hat{M}, \mathcal{A}^{op})$.
  \item \textbf{Step 2}
  From the definition of $\tilde{\chi}$ , we know that it is induced by changing the twist from $W_3\circ \tau\circ \rho$ to $\alpha\circ \iota\circ \rho$ using the trivialization given by the spinor bundle $\rho^{\ast}S$ and the canonical trivialization of $TM\oplus \upsilon$. On the other hand, we know that $I_{\ast}$ is the changing twist map induced by the canonical trivialization of $TM\oplus \upsilon$ and $\eta_{\ast}$ is the map induced by the homotopy $\eta$, which is induced by the spinor bundle $S$. So the commutativity of the second square follows from that the changing twist map is natural.
\end{itemize}
\end{proof}
In section 4, we proved that the geometric twisted $K$-homology group defined in \cite{BCW} is homotopy invariant and each finite $CW$-complex is homotopy equivalent to a smooth manifold. Therefore we have that the charge map is an isomorphism for any finite $CW$-complex.

\section{Geometric twisted K-cycles via bundle gerbes}
In this section we give another definition of geometric twisted $K$-homology via bundle gerbes. First of all, we give the definition of bundle gerbes first.
\begin{definition}
Given a $CW$-complex $B$, a bundle gerbe over $B$ is a pair $(P, Y)$, where $\pi: Y\rightarrow B$ is a locally split map and $P$ is a principal $U(1)$-bundle over $Y\times_M Y$ with an associative product, i.e for every point $(y_1, y_2), (y_2, y_3)\in Y\times_M Y$, there is an isomorphism
\begin{equation}
P_{(y_1, y_2)}\otimes_{\mathbb{C}} P_{(y_2, y_3)}\rightarrow P_{(y_1, y_3)}
\end{equation}
and the following diagram commutes
\begin{equation}
\begin{tikzpicture}
[
                    execute at begin node = \(,
                    execute at end node = \),
                    inner sep = .3333em,
                  ]
                  \matrix (m) {
                   |(i)|P_{(y_1, y_2)}\otimes P_{(y_2, y_3)}\otimes P_{(y_3, y_4)} \# \# |(l)|P_{(y_1, y_3)}\otimes P_{(y_3, y_4)}  \\
                   |(j)|P_{(y_1, y_2)}\otimes P_{(y_2, y_4)}  \#  \#  |(k)|P_{(y_1, y_4)} \\
                  };
                  \path[->]
                  (i)edge node[auto,swap]{} (j) edge node[auto]{} (l)
                  (j)edge node[auto,swap]{}(k)
                  (l)edge node[auto]{}(k) ;
\end{tikzpicture}
\end{equation}
\end{definition}
\par
For every principal $U(1)$-bundle $J$ over $B$ we can define a bundle gerbe $\delta(J)$ by $\delta(J)_{(y_1, y_2)}=J_{y_1}\otimes J^{\ast}_{y_2}$. The product is induced by the pairing between $J^{\ast}$ and $J$. A bundle gerbe $(P, Y)$ is called trivial if there is a hermitian line bundle $J$ such that $P\cong \delta(J)$. In this case, $J$ is called a trivialization of $(P, Y)$. For every bundle gerbe $(P, Y)$ over $M$ we can associate a third integer cohomology class $d(P)\in H^3(M, \mathbb{Z})$ to describe the non-triviality of the bundle gerbe which we call Dixmier-Douady class (see \cite{Murray}).
\begin{definition}
Two bundle gerbes $(P, Y)$ and $(Q, Z)$ are stable isomorphic to each other if there is a trivialization of $p_1^{\ast}(P)\otimes p_2^{\ast}(Q)^{\ast}$. Here $p_1: Y\times_B Z\rightarrow Y$ and $p_2: Y\times_B Z\rightarrow Z$ are the natural projections. And the trivialization is called a stable isomorphism between $(P, Y)$ and $(Q, Z)$.
\end{definition}
\par
The following theorem gives the relation between stable isomorphism classes and Dixmier-Douady classes.
\begin{theorem}
Two bundle gerbes are stable isomorphic iff their Dixmier-Douady classes are equal to each other. Moreover, the Dixmier-Douady class defines a bijection between stable isomorphic classes of bundle gerbes over $M$ and $H^3(M, \mathbb{Z})$.
\end{theorem}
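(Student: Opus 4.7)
The plan is to prove the two assertions separately: first the equivalence between stable isomorphism and equality of Dixmier-Douady classes, and then the surjectivity of the Dixmier-Douady invariant onto $H^{3}(M,\mathbb{Z})$. The whole argument rests on two structural facts about $d$: it vanishes on trivial gerbes $\delta(J)$, and it is multiplicative in the sense $d((P,Y)\otimes (Q,Z)^{\ast})=d(P,Y)-d(Q,Z)$ (where the tensor product is formed after pulling back to $Y\times_{B}Z$) and natural under pullback along surjective submersions. These properties I would establish directly from the Čech-theoretic definition of $d$: choose an open cover $\{U_{\alpha}\}$ of $B$ over which $\pi:Y\to B$ admits sections $s_{\alpha}$, pick trivializations of $P$ over $(s_{\alpha},s_{\beta})^{\ast}P\to U_{\alpha\beta}$, and extract a Čech $3$-cocycle from the associative product; independence of choices and functoriality are then routine.

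Granting these formal properties, the easy direction is immediate: if $(P,Y)$ and $(Q,Z)$ are stably isomorphic, then $p_{1}^{\ast}P\otimes p_{2}^{\ast}Q^{\ast}$ is trivial, so its Dixmier-Douady class vanishes; by multiplicativity and pullback-invariance $d(P,Y)=d(Q,Z)$ in $H^{3}(B,\mathbb{Z})$. The real work is the converse. Here I would reduce to a single claim: \emph{any bundle gerbe $(R,W)$ with $d(R,W)=0$ admits a trivialization}. Indeed, if $d(P,Y)=d(Q,Z)$, set $(R,W)=(p_{1}^{\ast}P\otimes p_{2}^{\ast}Q^{\ast}, Y\times_{B}Z)$, and a trivialization of $(R,W)$ is by definition a stable isomorphism between $(P,Y)$ and $(Q,Z)$.

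To prove this reduction claim, I would pick a good open cover $\{U_{\alpha}\}$ of $B$ with local sections $s_{\alpha}$ of $W\to B$, form the Čech $3$-cocycle $g_{\alpha\beta\gamma\delta}$ representing $d(R,W)$ and, using the hypothesis $d(R,W)=0$, choose a Čech $2$-cochain $h_{\alpha\beta\gamma}\in\check{C}^{2}(\{U_{\alpha}\},\underline{U(1)})$ whose coboundary is $g$. Since the transition data of $R$ are $U(1)$-valued functions on triple overlaps measuring the failure of an obvious identification $(s_{\alpha},s_{\beta})^{\ast}R\otimes(s_{\beta},s_{\gamma})^{\ast}R\cong(s_{\alpha},s_{\gamma})^{\ast}R$, the cochain $h$ allows me to modify these transitions and obtain an honest hermitian line bundle $J\to W$ together with an isomorphism $R\cong\delta(J)$. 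This is the main obstacle: one has to carefully pass from a Čech trivialization of the cocycle on $B$ to a geometric trivialization of the gerbe on $W$, compatible with the associative product. I would follow the argument of Murray-Stevenson, refining the cover if necessary and using that the sheaf $\underline{U(1)}$ is fine so that local-to-global issues are only cohomological.

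For the bijectivity statement it remains to show surjectivity of $d$ onto $H^{3}(B,\mathbb{Z})$. Given a class $\alpha\in H^{3}(B,\mathbb{Z})$, represent it by a map $\alpha:B\to K(\mathbb{Z},3)=BPU(\mathbb{H})$, let $\mathfrak{P}\to B$ be the pulled-back principal $PU(\mathbb{H})$-bundle, and form the associated \emph{lifting bundle gerbe}: take $Y=\mathfrak{P}$ with $\pi$ the bundle projection, and on $Y\times_{B}Y\cong\mathfrak{P}\times PU(\mathbb{H})$ pull back the canonical $U(1)$-bundle $U(\mathbb{H})\to PU(\mathbb{H})$; the group multiplication in $U(\mathbb{H})$ provides the associative product. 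A direct computation with the connecting homomorphism of the coefficient sequence $1\to U(1)\to U(\mathbb{H})\to PU(\mathbb{H})\to 1$ shows that the Dixmier-Douady class of this lifting gerbe equals $\alpha$. Combined with the injectivity established above, this yields the claimed bijection.
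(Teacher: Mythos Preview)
The paper does not actually prove this theorem; it is stated as background material, with the citation of Murray \cite{Murray} serving as the source. So there is no ``paper's own proof'' to compare against, and your outline is precisely the standard Murray and Murray--Stevenson argument that the paper is implicitly invoking.

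One genuine slip to fix: your Čech degrees are off by one. The cocycle one extracts from a bundle gerbe $(R,W)$ using local sections $s_{\alpha}$ of $W\to B$ is a \emph{2-cocycle}
\[
g_{\alpha\beta\gamma}\in\check{Z}^{2}\bigl(\{U_{\alpha}\},\underline{U(1)}\bigr),
\]
living on triple overlaps, whose class in $\check{H}^{2}(M,\underline{U(1)})\cong H^{3}(M,\mathbb{Z})$ is $d(R,W)$; it is not a 3-cocycle $g_{\alpha\beta\gamma\delta}$. Accordingly, when $d(R,W)=0$ the trivializing cochain is a \emph{1-cochain} $h_{\alpha\beta}$ with $\delta h=g$, and it is this data---not a 2-cochain---that lets you glue the local line bundles $(s_{\alpha},\cdot)^{\ast}R$ over $W|_{U_{\alpha}}$ into a global $J\to W$ with $R\cong\delta(J)$. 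With that correction, the rest of your plan (multiplicativity and pullback-naturality of $d$, the reduction to ``$d=0$ implies trivial'', and surjectivity via the lifting gerbe of the associated $PU(\mathbb{H})$-bundle) is sound and matches the literature the paper relies on.
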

\par
Now we give another definition which is important in our construction of geometric twisted $K$-homology.
\begin{definition}
Let $(P, Y)$ be a bundle gerbe over $B$. A finite dimensional hermitian bundle $E$ over $Y$ is called a $(P, Y)$-module if there is a complex vector bundle isomorphism
\begin{equation*}
\phi: P\otimes \pi_1^{\ast}(E)\cong \pi_2^{\ast}(E)
\end{equation*}
which is compatible with the bundle gerbe product, i.e the following diagram is commutative
\begin{equation*}
\begin{tikzpicture}
[
                    execute at begin node = \(,
                    execute at end node = \),
                    inner sep = .3333em,
                  ]
                  \matrix (m) {
                   |(i)|P_{(y_1, y_2)}\otimes P_{(y_2, y_3)}\otimes E_{y_3} \# \# |(l)|P_{(y_1, y_3)}\otimes E_{y_3}  \\
                   |(j)|P_{(y_1, y_2)}\otimes E_{y_2} \#  \#  |(k)|E_{y_1} \\
                  };
                  \path[->]
                  (i)edge node[auto,swap]{} (j) edge node[auto]{} (l)
                  (j)edge node[auto,swap]{}(k)
                  (l)edge node[auto]{}(k) ;
\end{tikzpicture}
\end{equation*}
where $\pi_i$ ($i=1, 2$) are two projections from $Y\times_B Y$ to $Y$. Moreover, the Grothendieck group of isomorphism classes of bundle gerbe module over $(P, Y)$ is called the $K$-group of $(P, Y)$.
\end{definition}
Now we give the construction of geometric twisted $K$-cycles
\begin{definition}
Let $B$ be a space , $H\in H^3_{tor}(X, \mathbb{Z})$ and $(P, Y)$ be a bundle gerbe over $B$ with $-H$ as Dixmier-Douady class. A geometric twisted $K$-cycle is a triple $(M, f, E)$ where
\begin{itemize}
\item $M$ is a compact spin$^c$-manifold;
\item $f: M\rightarrow B$ is continuous;
\item $[E]$ is an isomorphism class of $f^{\ast}(P, Y)$-module.
\end{itemize}
We denote the whole twisted $K$-cycles over $(B, H)$  by $\Gamma_{(P, Y)}(B)$
\end{definition}
To give twisted $K$-homology group, we also need to define an equivalence $\sim$ on these geometric cycles as follows
\begin{itemize}
\item{\textbf{Direct sum-disjoint union}} For any two cycles $(M, f, E_1)$ and $(M, f, E_2)$ over $(B, (P, Y))$, then we have
 \begin{equation}
 (M, f, [E_1])\cup (M, f, [E_2])\sim (M, f, [E_1]+[E_2])
 \end{equation}
 \item{\textbf{Bordism}} Given two cycles $(M_0, f_0, E_0)$ and $(M_1, f_1, E_1)$ over $(B, (P, Y))$, if there exists a cycle $(M, f, E)$ over $(B, (P, Y))$ such that
     \begin{equation}
     \partial M = -M_0\cup M_1
     \end{equation}
     and $E_{\partial M}= -[E_0]\cup [E_1]$, then
     \begin{equation}
     (M_0, f_0, E_0)\sim (M_1, f_1, E_1)
     \end{equation}
 \item{\textbf{Spin$^c$-vector bundle modification}} Let $(M, f, E)$ be a geometric twisted $K$-cycle over $(B, (P, Y))$ and an even dimensional spin$^c$-vector bundle $V$ over $M$. Let $\hat{M}$ to be the sphere bundle of $V\oplus \underline{\mathbb{R}}$. Denote the bundle map by $\rho: \hat{M}\rightarrow M$ and the positive spinor bundle of $T^v(\hat{M})$ by $S_V^+$. The vector bundle $S_V^+\otimes \rho^{\ast}(E)$ over $\hat{M}$ is a $(\rho\circ f)^{\ast}(P, Y)$ module. Then
     \begin{equation}
     (M, f,[ E])\sim (\hat{M}, \rho\circ f, [S_V^+\otimes \rho^{\ast}(E)])
     \end{equation}
\end{itemize}
\begin{definition}
For any space $B$ and bundle gerbe $(P, H)$ over $B$. We define $K^{gg}_{i, (P, Y)}(B, H)= \Gamma_i(B, (P, Y))/ \sim$ ($i$=$0$, $1$). The parity depends on the dimension of the spin$^c$-manifold in a twisted $K$-cycle.
\end{definition}
\begin{proposition}
If $(P, Y)$ and $(Q, Z)$ are two bundle gerbes over $B$ with the same Dixmier-Douady class $-H$, then we have $K^{gg}_{i, (P, Y)}(B, H)$ is isomorphic to $K^{gg}_{i, (Q, Z)}(B, H)$.
\end{proposition}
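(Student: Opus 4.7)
The plan is to exploit the theorem quoted just above the proposition: two bundle gerbes over $B$ with the same Dixmier-Douady class are stably isomorphic. A stable isomorphism from $(P,Y)$ to $(Q,Z)$ consists of a hermitian line bundle $J$ on $W := Y\times_B Z$ together with a bundle gerbe isomorphism $\delta(J)\cong p_1^{\ast}P\otimes p_2^{\ast}Q^{\ast}$; such a $J$ exists since both gerbes have Dixmier-Douady class $-H$. I would use $J$ to transport bundle gerbe modules and thereby define an isomorphism of cycle sets that descends to $K^{gg}$.

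First I would construct the map at the level of cycles. Given $(M,f,[E])\in \Gamma(B,(P,Y))$, pull the stable isomorphism back along $f$ to obtain $f^{\ast}J$ on $f^{\ast}W = f^{\ast}Y \times_M f^{\ast}Z$ with projections $\tilde{p}_1,\tilde{p}_2$, trivializing $\tilde{p}_1^{\ast}(f^{\ast}P)\otimes \tilde{p}_2^{\ast}(f^{\ast}Q)^{\ast}$. The pull back $\tilde{p}_1^{\ast}E$ is a $\tilde{p}_1^{\ast}(f^{\ast}P,f^{\ast}Y)$-module, and tensoring with $(f^{\ast}J)^{\ast}$ converts it to a $\tilde{p}_2^{\ast}(f^{\ast}Q,f^{\ast}Z)$-module on $f^{\ast}W$. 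The compatibility of the stable isomorphism with the bundle gerbe products guarantees that this module descends along $\tilde{p}_2$ to a bona fide $f^{\ast}(Q,Z)$-module $E'$ on $f^{\ast}Z$, uniquely up to canonical isomorphism. Set $\Phi(M,f,[E])= (M,f,[E'])$.

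Next I would check that $\Phi$ respects the three generating relations. Direct sum with disjoint union follows because the construction is additive in $E$ and preserves the underlying spin$^c$-manifold and map $f$. For bordism, given a bordism $(N,g,[\tilde{E}])$ between two cycles, applying the same fibrewise transport along $N$ produces a bordism over $(Q,Z)$ whose boundary is the transport of the original boundary; here one uses that the stable isomorphism pulls back naturally under restriction to $\partial N$. For spin$^c$-vector bundle modification, the spinor bundle $S_V^+$ and pull back along $\rho$ commute with the transport because they act on the manifold side and do not interact with the gerbe module structure. The inverse map is constructed identically from the dual line bundle $J^{\ast}$, which furnishes the inverse stable isomorphism; at the cycle level the two compositions produce modules canonically isomorphic to the original ones, hence equal in $K^{gg}$.

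The main obstacle will be verifying that the transported module $E'$ satisfies the compatibility diagram with the product of $f^{\ast}Q$, in other words that the category of bundle gerbe modules is an invariant of the stable isomorphism class of the gerbe. This is well known in the bundle gerbe literature but requires a careful diagram chase matching the product structure on $P$ with that on $Q$ through $J$. Once this descent-of-modules statement is in place, the remaining verifications are formal because both the transport and the three geometric relations are natural in the map $f:M\to B$.
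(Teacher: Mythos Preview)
Your proposal is correct and follows essentially the same route as the paper: use a stable isomorphism to transport bundle gerbe modules from $(P,Y)$ to $(Q,Z)$, verify compatibility with disjoint union, bordism, and spin$^c$-modification, and invert by using the dual trivialization. The only cosmetic difference is that the paper first reduces to the case $Z=Y$ (by replacing both gerbes with their pullbacks to $Y\times_B Z$), so the transport becomes a simple tensoring $E\mapsto E\otimes L_R$ with the associated line bundle of the trivialization; your version keeps the full fiber product and spells out the descent along $\tilde p_2$, which is the same content unpacked.
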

\begin{proof}
Let $R$ be a stable isomorphism between $(P, Y)$ and $(Q, Z)$ i.e  a trivialization of $p_1^{\ast}(P)\otimes p_2^{\ast}(Q)$. Without loss of generality we can just assume that $Z=Y$. Otherwise we can consider the bundle gerbe $(p_1^{\ast}P, Y\times_B Z)$ and $(p_2^{\ast}Q, Y\times_B Z)$ instead.  Let $(M, f, [E])\in \Gamma^i(B, (P, Y))$. Since $Q\cong P\otimes R$, therefore $(M, f, [E]\otimes L_{R})$ (here $L_{R}$ is the natural associated line bundle of $R$) is a twisted geometric $K$-cycle over $(B, (Q, Y))$. So we get a homomorphism from $\Gamma(B, (P, Y))$ to $\Gamma(B, (Q, Z))$, which we denote by $r$. A tedious check tells us that $r$ respects disjoint union, bordism and spin$^c$-bundle modification. Therefore $r$ induces a homomorphism from $K_i^{gg}(B, (P, Y))$ to $K_i^{gg}(B, (Q, Y))$. If we change the roles of $(P, Y)$ and $(Q, Z)$ in the above construction, then we get an inverse of $r$. Therefore $r$ is an isomorphism.
\end{proof}
Let $(P, Y)$ be a bundle gerbe over $B$ with Dixmier-Douady class $H$. According to \textbf{Proposition 6.4} in \cite{BCMMS}, the $i$-th $K$-group of bundle gerbe $(P, Y)$ is isomorphic to the $i$-th twisted $K$-group $K^i(X, -H)$. Then the definitions of $K^{gg}_{\ast}(X, H)$ and $K^{top}_{\ast}(X, \alpha)$ implies the following proposition:
\begin{proposition}
Let $X$ be a finite $CW$-complex and $H\in H^3_{torsion}(X, \mathbb{Z})$. Then we have
\begin{equation}
K^{gg}_i(X, (P, Y))\cong K^{top}_i(X, \alpha)\cong K^{geo}_i(X, \alpha)
\end{equation}
\end{proposition}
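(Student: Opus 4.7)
The plan is to handle the two isomorphisms separately. The isomorphism $K^{top}_i(X,\alpha)\cong K^{geo}_i(X,\alpha)$ is already essentially proved in the previous section: combining $F: K^{geo}(X,\mathcal{A})\to K^g(X,\alpha)$ from Section 3 with the assembly map $\mu$ and the map $\eta$, the commutativity of diagram \eqref{diagram h} established in the proposition of Section 5 shows that the charge map $h$ is an isomorphism whenever $X$ is a finite $CW$-complex. So the main work is to produce an isomorphism $\Phi: K^{gg}_i(X,(P,Y))\cong K^{top}_i(X,\alpha)$.

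To construct $\Phi$ at the level of cycles, I would proceed as follows. Given a geometric twisted $K$-cycle $(M, f, [E])$ with $[E]$ an isomorphism class of $f^{\ast}(P,Y)$-module, applying Proposition 6.4 of \cite{BCMMS} to the pulled-back bundle gerbe $f^{\ast}(P,Y)$ over $M$ provides a canonical isomorphism from the $K$-group of $f^{\ast}(P,Y)$-modules to $K^0(M, -f^{\ast}H)$, which under the dictionary between bundle gerbe twists and $\mathcal{K}$-bundle twists equals $K_0(\Gamma(M, f^{\ast}\mathcal{A}^{op}))$ (since $DD(\mathcal{A}^{op})=-H$). Denoting this isomorphism by $\sigma_M$, I set $\Phi([M,f,[E]]) := [(M, \sigma_M([E]), f)]$. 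The inverse map is defined analogously, via $\sigma_M^{-1}$.

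Next, I would verify that $\Phi$ descends to the quotient by $\sim$, by checking compatibility with each of the three basic relations. Disjoint union/direct sum is immediate, because direct sum of $(P,Y)$-modules corresponds to the addition in $K_0$ under $\sigma_M$. For bordism, given a cycle $(W, F, [\mathcal{E}])$ over the bordism $W$, one uses naturality of $\sigma$ under the pullback along the inclusion $\partial W\hookrightarrow W$: the restriction $\sigma_W([\mathcal{E}])|_{\partial W}$ matches $\sigma_{\partial W}([\mathcal{E}|_{\partial W}])$, so the two bordism equivalences correspond under $\Phi$. Vector bundle modification is the most delicate; here one must show that the gerbe-side operation $[E]\mapsto [S^+_V\otimes \rho^{\ast}E]$ matches the topological Gysin-type operation governing vector bundle modification of the cycle $(M,\sigma_M([E]),f)$. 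This should reduce to the naturality of $\sigma$ together with the standard spin$^c$ Thom isomorphism for $V\oplus\underline{\mathbb{R}}$, which is implicit on both sides of the construction.

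The main obstacle I expect is precisely this vector bundle modification compatibility, since it couples two separate constructions: the tensor product with the vertical spinor bundle on the gerbe side, and the Gysin map for continuous sections of $\mathcal{K}$-bundles on the topological side. Once this compatibility is established, together with the easier disjoint union and bordism cases, $\Phi$ is a well-defined homomorphism with a well-defined inverse, hence an isomorphism. Composing with the already-known $K^{top}_i(X,\alpha)\cong K^{geo}_i(X,\alpha)$ from Section 5 then yields the full chain of isomorphisms claimed in the proposition.
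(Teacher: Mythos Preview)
Your approach is essentially the same as the paper's: the paper simply observes that by Proposition~6.4 of \cite{BCMMS} the $K$-group of $f^{\ast}(P,Y)$-modules over $M$ is canonically isomorphic to $K_0(\Gamma(M,f^{\ast}\mathcal{A}^{op}))$, and then notes that the cycle-level definitions of $K^{gg}$ and $K^{top}$ (compact spin$^c$ manifold, map to $X$, coefficient class) and their three generating relations match under this identification, so the quotient groups agree; the second isomorphism is taken from Section~5. Your write-up is more detailed---in particular you single out the vector-bundle-modification compatibility as the point requiring care---but the paper leaves all of this implicit and states the proposition as an immediate consequence of the definitions.
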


\section{Some constructions about geometric twisted K-cycles}
First of all we review notions about topological $T$-duality in \cite{TB}.
Let $\pi: P\rightarrow B$ and $\hat{\pi}: \hat{P}\rightarrow B$ be principal $S^1$-bundles over $B$ and $H\in H^3(P, \mathbb{Z}),  \hat{H}\in H^3(\hat{P}, \mathbb{Z})$. Denote the associated line bundles of $P$ and $\hat{P}$ by $E$ and $\hat{E}$ respectively. Let $V= E\bigoplus \hat{E}$ and $r: S(V)\rightarrow B$ be the unit sphere bundle of $V$.
\begin{definition}
A class $\mathbf{Th}\in H^3(S(V), \mathbb{Z})$ is called Thom class if $r_!(\mathbf{Th})=1 \in H^0(B, \mathbb{Z})$.
\end{definition}
Let $i: P\rightarrow S(V)$ and $\hat{i}: \hat{P}\rightarrow S(V)$ be inclusion of principal $S^1$-bundle into $S^3$-bundle.
\begin{definition}
We say that $((P, H), (\hat{P}, \hat{H}))$ is a $T$-dual pair over $B$ or $(P, H)$ and $(\hat{P}, \hat{H})$ are $T$-dual to each other if there exists a Thom class $\mathbf{Th}\in H^3(S(V),\mathbb{Z})$ such that
\begin{equation}
H=i^{\ast}(\mathbf{Th}),\ \  \hat{H}=\hat{i}^{\ast}(\mathbf{Th})
\end{equation}
\end{definition}
Given a $T$-dual pair $((P, H), (\hat{P}, \hat{H}))$ over $B$, there exists a $T$-duality isomorphism between the associated twisted $K$-groups, which is given by
\begin{equation}
T= \hat{p}_!\circ u \circ p^{\ast}: K^i(P, H)\rightarrow K^{i-1}(\hat{P}, \hat{H})
\end{equation}
Here $p^{\ast}$ and $\hat{p}_!$ are the pullback and pushforward maps respectively. $u$ is a changing twist map from $K^i(P\times_B\hat{P}, p^{\ast}(H))$ to $K^i(P\times_B\hat{P}, \hat{p}^{\ast}(\hat{H}))$.
In order to establish the $T$-duality transformation of geometric twisted $K$-homology we first give the construction of analogue maps of induced map , wrong way map and changing twist map for twisted $K$-homology.
\begin{enumerate}
\item{\textbf{Induced map}}
 Assume $f:(X_1,Y_1)\rightarrow (X_2,Y_2)$ is a continuous map between two pairs of topological spaces. Then the induced map $f_{\ast}: K^g_i(X_1, Y_1; \alpha\circ f)\rightarrow K^g_i(X_2, Y_2; \alpha)$ is defined as follows:
    \begin{equation}
    f_{\ast}([M,\phi,\upsilon,\eta, E])=[(M,\phi \circ f,\upsilon,\eta, E)]
    \end{equation}
\item{\textbf{Wrong way map}} Let $f: P\rightarrow N$ be a $K$-oriented map between smooth manifolds and $\alpha: N\rightarrow K(\mathbb{Z},3)$ be a twist over $N$. we define the wrong way map $f^!: K^g_{i-1}(N, \alpha)\rightarrow K^g _i(P, \alpha\circ f)$ to be
  \begin{equation}
  \pi^!([M,\phi,\upsilon,\eta, E])=[(\tilde{M},\tilde{\phi},\tilde{\upsilon},\tilde{\eta},\tilde{\pi}^{\ast}(E))]
  \end{equation}
Here $\tilde{M}$ is the fiber product $M\times_N M$, $\tilde{\upsilon}$ is the stable normal bundle of $\tilde{M}$ and $\tilde{\eta}$ is induced by $\eta$.
\begin{remark}
As $f$ is $K$-oriented, therefore $W_3(\tilde{\upsilon}\oplus f^{\ast}(\upsilon))$ is trivial, which implies that $W_3\circ \tilde{\upsilon}$ is homotopic to $\alpha\circ f\circ \tilde{\phi}$ via a homotopy $\lambda$. $\tilde{\eta}: \tilde{M}\times [0, 1]\rightarrow K(\mathbb{Z}, 3)$ is given by the combination of $\lambda$ and $\eta$ as follows:
\begin{eqnarray*}
\tilde{\eta}(x, t)=
\begin{cases}
  \lambda(x, 2t), & 0\leq t\leq 1/2;\\
  \eta\circ (f'\times id)(x, 2t-1), & 1/2\leq t\leq 1.
\end{cases}
\end{eqnarray*}
in which $f': \tilde{M}\rightarrow M$ is the canonical projection to $M$. Therefore we get that $(\tilde{M},\tilde{\phi},\tilde{\upsilon},\tilde{\eta},\tilde{\pi}^{\ast}(E))$ is a twisted geometric cycle over $(P, \alpha\circ f)$. In particular, when $f$ is the bundle map for a principal $S^1$-bundle, $\tilde{M}$ is the pullback $S^1$-bundle along $f$.
\end{remark}
\item{\textbf{Changing twist map}} Let $P\rightarrow B$ be a principal $\mathbb{T}^2$-bundle over $B$. If we have two twist $\alpha_1,\alpha_2:P \rightarrow K(\mathbb{Z},3)$ and their associated cohomology classes are the same, then $\alpha_1$ and $\alpha_2$ are homotopic. Choose a  homotopy $h$ such that the restriction of $h$ to each fiber of $P$ corresponds to the cohomology class $\theta\cup \hat{\theta}\in H^2(P_b, \mathbb{Z})$. Here $\theta $ and $\hat{\theta}$ are generators of the first cohomology group of the two copies of $S^1$ of a fiber. Then for a geometric cycle $\delta$ of $(X,\alpha_1)$ one can define the changing twist map $u: K^g_i(P, \alpha_1)\rightarrow K^g_i(P, \alpha_2)$ as follows:
    \begin{equation}
    u([M,\phi,\upsilon,\eta, E])=[(M,\phi,\upsilon,\hat{\eta},E)]
    \end{equation}
    Here $\hat{\eta}$ is induced by the following diagram:
    \begin{equation*}
\begin{tikzpicture}
[shorten >=1pt,node distance=1.5cm,auto]
\node []  (M)  {$W_3\circ \upsilon$};
\node[]  (X)[right=of M] {$\alpha_1 \circ \phi$};
\node[]  (Y)[right=of X] {$\alpha_2\circ \phi$};
\path[->]
(M)  edge[above] node {$\eta$} (X)
(X) edge[above] node {$h\circ(\phi\times id)$} (Y);
\end{tikzpicture}
\end{equation*}
More explicitly, the $\hat{\eta}$ is given by the composition of $\eta$ and $h\circ (\phi\times id)$, which we denote by $ (h\circ (\phi\times id))\ast \eta$
\begin{eqnarray*}
 (h\circ (\phi\times id))(\eta)(x, t)=
\begin{cases}
  \eta(x, 2t), & 0\leq t\leq 1/2;\\
  h\circ(\phi\times id)(x, 2t-1), & 1/2\leq t\leq 1.
\end{cases}
\end{eqnarray*}
\end{enumerate}
\begin{lemma}
The induced map, wrong way map and changing twist map above are all compatible with disjoint union, bordism and spin$^c$-vector bundle modification.
\end{lemma}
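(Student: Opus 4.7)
The plan is to check each of the three equivalence relations (disjoint union, bordism, spin$^c$-vector bundle modification) for each of the three maps, giving a grid of nine essentially routine verifications. I would organise the proof by map.

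For the induced map $f_\ast$, the argument is identical to that of Lemma \ref{induced map}: postcomposing $\phi$ with $f$ leaves the underlying spin$^c$-manifold, the twisting data $(\upsilon,\eta)$, and the vector bundle $E$ untouched, so the three compatibilities reduce verbatim to the ones already established there, and I would simply refer back.

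For the wrong way map $f^!$, I would exploit the functoriality of the pullback construction $\tilde M$. Disjoint union is immediate from $\widetilde{M_1 \cup M_2} = \tilde M_1 \cup \tilde M_2$ together with additivity of pullback bundles. Spin$^c$-vector bundle modification is a direct check: the sphere bundle of $\rho^\ast V \oplus \underline{\mathbb{R}}$ over $\tilde M$ agrees with the pullback to $\tilde M$ of the sphere bundle of $V \oplus \underline{\mathbb{R}}$ over $M$, and the vertical tangent bundle, the positive spinor bundle $S_V^+$, and the vector bundle factor all pull back accordingly, so the operations of modification and pullback commute up to canonical isomorphism. For bordism, a bordism $(W,\phi_W,\upsilon_W,\eta_W,E_W)$ between $(M_0,\ldots)$ and $(M_1,\ldots)$ pulls back to a manifold $\tilde W$ with boundary $\tilde M_0 \cup \tilde M_1$; the $K$-orientation of $f$ furnishes an auxiliary homotopy $\lambda_W$ witnessing $W_3\circ\tilde\upsilon_W\simeq \alpha\circ f\circ \tilde\phi_W$, and concatenating this with $\eta_W\circ(f'_W\times \mathrm{id})$ produces $\tilde\eta_W$ as in the Remark preceding the lemma.

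For the changing twist map $u$, only $\eta$ is altered, by concatenation with the fixed homotopy $h\circ(\phi\times \mathrm{id})$. Disjoint union and bordism are immediate from the naturality of concatenation: applying $u$ slice-wise to a bordism cylinder yields a bordism between the images, and the concatenation commutes with restriction to the boundary. Spin$^c$-vector bundle modification reduces to the pointwise identity
\begin{equation*}
\bigl((h\circ(\phi\times \mathrm{id}))\ast \eta\bigr)\circ(\rho\times \mathrm{id})
\;=\;(h\circ((\phi\circ \rho)\times \mathrm{id}))\ast (\eta\circ (\rho\times \mathrm{id})),
\end{equation*}
which holds on the nose because concatenation of paths commutes with precomposition by any continuous map.

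The main obstacle is the bordism case for the wrong way map: one must verify that the chosen homotopy $\lambda_W$ on $W$ restricts to homotopies on $M_0$ and $M_1$ compatible with (not merely homotopic to) the choices $\lambda_0,\lambda_1$ used in constructing $\tilde\eta_0,\tilde\eta_1$. Any two such choices are themselves homotopic, so any discrepancy can be absorbed by gluing a short collar bordism onto $\tilde W$ in which $\tilde\eta_W$ is deformed to match; this bookkeeping is the only non-routine step in the lemma.
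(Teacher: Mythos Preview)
Your proposal is correct and follows essentially the same route as the paper: refer back to Lemma~\ref{induced map} for the induced map, use that pullback commutes with boundary and with sphere-bundle formation for the wrong way map, and use naturality of path concatenation for the changing twist map. You are in fact more careful than the paper on one point: the paper simply asserts that $(\tilde M,\tilde\phi,\tilde\upsilon,\tilde\eta,\tilde\pi^{\ast}E)$ is a bordism between the $f^!$-images without addressing the compatibility of the auxiliary homotopy $\lambda_W$ with the boundary choices $\lambda_0,\lambda_1$, whereas you correctly isolate this as the one non-routine step and dispose of it with a collar argument; your treatment of spin$^c$-modification for $u$ (equality on the nose via precomposition commuting with concatenation) is also sharper than the paper's appeal to associativity of homotopies up to homotopy.
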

\begin{proof}
We have proved the induced map part in Lemma \ref{induced map} and it is not hard to see that they all respect the disjoint union. We do the rest here.
\begin{itemize}
\item  Let $(M, \phi, \upsilon, \eta, E)$ be a bordism between $(M_1, \phi_1, \upsilon_1, \eta_1, E_1)$ and $(M_2, \phi_2,$ \\
$\upsilon_2, \eta_2, E_2)$ over $X$. Denote $p^!(M_i, \phi_i, \upsilon_i, \eta_i, E_i)$ by $(\tilde{M_i}, \tilde{\phi_i}, \tilde{\upsilon_i}, \tilde{\eta_i}, \tilde{\pi}^{\ast}E_i)$. Since the boundary of a pullback space is the pullback of the original boundary and the stable normal bundle of a principal $S^1$-bundle is isomorphic to the stable normal bundle of the base space, we get that $(\tilde{M}, \tilde{\phi}, \\ \tilde{\upsilon}, \tilde{\eta}, \tilde{\pi}^{\ast}E)$ gives a bordism between
$(\tilde{M_1}, \tilde{\phi_1}, \tilde{\upsilon_1}, \tilde{\eta_1}, \tilde{\pi}^{\ast}E_1)$ and $(\tilde{M_2}, \tilde{\phi_2},\\
\tilde{\upsilon_2}, \tilde{\eta_2}, \tilde{\pi}^{\ast}E_2)$.

Use the notation above. $p^!(\hat{M}, \phi\circ \rho, \upsilon', \eta', S^+_V\otimes \rho^{\ast}E)$ is $(\tilde{\hat{M}}, \phi\circ \rho \circ \tilde{\hat{\pi}}, \tilde{\upsilon'}, \tilde{\eta'},$ \\
$\tilde{\pi}^{\ast}(S^+_V\otimes \rho^{\ast}E))$. On the other hand,  $\tilde{\pi}^{\ast}V$ is a spin$^c$-vector bundle over $\tilde{M}$. The associated spin$^c$-vector bundle modification of $(\tilde{M}, \tilde{\phi}, \tilde{\upsilon}, \tilde{\eta}, \tilde{\pi}^{\ast}E)$ is that $(\tilde{\hat{M}}, \tilde{\phi}\circ \tilde{\rho}, \upsilon'', \tilde{\eta}', \tilde{\pi}^{\ast}(S^+_V\otimes \rho^{\ast}E))$. The maps in the above twisted geometric $K$-cycles are shown in the following diagram:
\begin{equation}
             \begin{tikzpicture}
               [
                    execute at begin node = \(,
                    execute at end node = \),
                    inner sep = .3333em,
                  ]
                  \matrix (m) {
                   \# |(i)|\tilde{\hat{M}} \#  \\
                   |(j)|\tilde{M}  \#  \#  |(k)|\hat{M} \\
                    \# |(l)|M \#  \\
                  };
                  \path[->]
                  (i)edge node[auto,swap]{\tilde{\rho}} (j) edge node[auto]{\tilde{\hat{\pi}}} (k)
                  (j)edge node[auto,swap]{\tilde{\pi}}(l)
                  (k)edge node[auto]{\rho}(l) ;
              \end{tikzpicture}
              \end{equation}
  By the commutativity, we have that $\phi\circ \rho\circ \tilde{\hat{\pi}}= \tilde{\phi}\circ \tilde{\rho}$. Moreover, $\tilde{\upsilon'}$ and $\upsilon''$ are homotopic because they are both classifying maps of the stable normal bundle of $\tilde{\hat{M}}$. The coincidence of $\tilde{\upsilon'}$ and $\upsilon''$ implies that $\tilde{\eta'}$ and $\tilde{\eta}'$ are homotopic to each other. So the wrong way map respects the spin$^c$-vector bundle modification relation.
\item Use the above notions. $u((M,\phi, \upsilon, \eta, E))$ still gives a bordism between $(M_1, \phi_1, \upsilon_1, \hat{\eta_1}, E_1)$ and $(M_2, \phi_2, \upsilon_2, \hat{\eta_2}, E_2)$ i.e $u$ respects bordism equivalence. Since the composition of homotopies are associative up to homotopy, we get that $u$ respects the spin$^c$-vector bundle modification relation.
\end{itemize}
\end{proof}

\section{T-duality for twisted geometric K-homology}
\begin{theorem}\label{T duality for twisted K homology}
Let $B$ be a finite $CW$-complex and $((P, H), (\hat{P}, \hat{H}))$ be a $T$-dual pair over $B$.
\begin{equation*}
             \begin{tikzpicture}
               [
                    execute at begin node = \(,
                    execute at end node = \),
                    inner sep = .3333em,
                  ]
                  \matrix (m) {
                   \# |(i)|P\times_B \hat{P} \#  \\
                   |(j)|P  \#  \#  |(k)|\hat{P} \\
                    \# |(l)|B \#  \\
                  };
                  \path[->]
                  (i)edge node[auto,swap]{p} (j) edge node[auto]{\hat{p}} (k)
                  (j)edge node[auto,swap]{\pi}(l)
                  (k)edge node[auto]{\hat{\pi}}(l) ;
              \end{tikzpicture}
              \end{equation*}
Moreover, we assume that $\alpha: P\rightarrow K(\mathbb{Z}, 3)$ and $\hat{\alpha}: \hat{P}\rightarrow K(\mathbb{Z}, 3)$ satisfy that $\alpha^{\ast}([\Theta])= H$ and $\hat{\alpha^{\ast}}([\Theta])=\hat{H}$ (Here $[\Theta]$ is the positive generator of $H^3(K(\mathbb{Z},3), \mathbb{Z})$). Moreover we assume that both $\alpha$ and $[\hat{\alpha}]$ is representable. Then the map $T=\hat{p}_{\ast}\circ u\circ p^!:K_{\ast}^{g}(P,\alpha)\mapsto K_{\ast+1}^{g}(\hat{P},\hat{\alpha})$ is an isomorphism.
\end{theorem}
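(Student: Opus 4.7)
The plan is to compare $T$ with the analytic $T$-duality map via the assembly map isomorphism of Theorem \ref{Analytic index }. Since both $P$ and $\hat{P}$ are principal $S^1$-bundles over a finite $CW$-complex and hence (up to homotopy) smooth manifolds, $\mu$ yields isomorphisms $K^g_{\ast}(P,\alpha) \cong K^a_{\ast}(P,\alpha)$ and $K^g_{\ast+1}(\hat{P},\hat\alpha) \cong K^a_{\ast+1}(\hat{P},\hat\alpha)$. The strategy is to identify $T$ under these isomorphisms with the analytic $T$-duality map $T^a = \hat{p}_{\ast}\circ u \circ p^!$, which is known to be an isomorphism following Bunke--Schick \cite{TB}.

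First, I would verify that each of the three geometric operations entering the definition of $T$ is intertwined by $\mu$ with its analytic counterpart. The pushforward along $\hat{p}$ is handled naturally by the $\iota_{\ast}$ factor in the definition $\mu = \iota_{\ast}\circ \eta_{\ast}\circ I^{\ast}\circ PD$. The changing twist map $u$ corresponds under $\mu$ to the analytic changing twist, essentially because the $\eta_{\ast}$ factor absorbs the extra homotopy $h\circ(\phi\times \mathrm{id})$ by concatenation. The wrong way map $p^!$ requires identifying the geometric fibre product $\tilde M = M\times_P(P\times_B \hat P)$ together with its induced stable normal data and homotopy data $\tilde\eta$ with the analytic Gysin map associated to the $K$-oriented $S^1$-bundle $p$; this is the technical core of the comparison.

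Second, assembling these three naturality statements yields a commuting square $\mu\circ T = T^a\circ \mu$. Since $\mu$ is an isomorphism by Theorem \ref{Analytic index } and $T^a$ is an isomorphism by \cite{TB} (transported to twisted $K$-homology either via Poincar\'e duality between twisted $K$-theory and twisted $K$-homology, or equivalently by rerunning the Bunke--Schick Mayer--Vietoris argument in the $K$-homology setting using Theorem \ref{MV sequence} and Theorem \ref{Milnor sequence}), it follows that $T$ is an isomorphism.

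The main obstacle is the naturality of $\mu$ with respect to the geometric wrong way map $p^!$: one must match the concatenated homotopy $\tilde\eta$ appearing in the definition of $\tilde M$ with the twist data produced by the analytic Gysin construction, and verify that the pulled back spin${}^c$ and bundle data correspond. As a backup avoiding this comparison altogether, one can prove the theorem by a direct induction on the cells of $B$: establish the base case $B = \mathrm{pt}$ (where $P \cong \hat P \cong S^1$, both twists are trivial, and $T$ can be computed explicitly on generators of $K^g_{\ast}(S^1)$), and carry out the inductive step via a Mayer--Vietoris ladder from Theorem \ref{MV sequence} connecting the T-duality transformations over an open cover that trivialises the bundles, together with the homotopy invariance of Theorem \ref{homotopy} and the five lemma.
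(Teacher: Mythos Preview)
Your backup strategy is exactly what the paper does: induction on the number of cells of $B$, with the base case $B=\mathrm{pt}$ handled by an explicit cobordism argument on $S^1\times S^1$ (the paper shows $T'\circ T=\mathrm{id}$ by exhibiting a bordism between $M\times S^1\times S^1$ and a trivial $S^2$-bundle modification of $M$, using a solid torus with a disk removed), and the inductive step carried out via the Mayer--Vietoris ladder of Theorem~\ref{MV sequence} together with the Five Lemma, after first checking that $T$ commutes with the induced maps and the boundary operator $\delta$.

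Your primary route through the assembly map $\mu$ is a genuinely different argument. It has the advantage of reducing everything to the already-established analytic $T$-duality, but at the cost of the naturality checks you flag: compatibility of $\mu$ with $p^!$ is not proved anywhere in the paper and would need a careful treatment of how the concatenated homotopy $\tilde\eta$ interacts with the Poincar\'e duality and $\eta_\ast$ factors in $\mu$. The paper's approach avoids this entirely by staying on the geometric side; it is more self-contained (no appeal to \cite{TB} or to an analytic $T^a$), and the only external ingredient is the Mayer--Vietoris machinery already set up in Section~4. So your backup is both closer to the paper and, in this context, the cleaner choice; your primary approach would work but requires proving an auxiliary compatibility that is at least as much work as the direct induction.
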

To prove the theorem \eqref{T duality for twisted K homology} we need the following lemmas.
\begin{lemma}
$T$ is compatible with the boundary operator and the induced map in the Mayer-Vietoris sequence.
\end{lemma}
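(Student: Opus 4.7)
The plan is to decompose $T = \hat{p}_{\ast} \circ u \circ p^{!}$ and check separately that each of the three constituents commutes with the Mayer--Vietoris connecting maps. Given that the Mayer--Vietoris sequence of Theorem \ref{MV sequence} was derived from the six-term exact sequence of Theorem \ref{six term exact sequence} via excision, it is enough to verify compatibility with (i) the induced maps associated to inclusions of open sets, and (ii) the boundary operator $\delta([M,\phi,\upsilon,\eta,E])=[(\partial M,\phi|_{\partial M},\upsilon|_{\partial M},\eta|_{\partial M\times[0,1]},E|_{\partial M})]$. Over each open $U\subset B$ the restrictions $(P|_U,H|_U)$ and $(\hat P|_U,\hat H|_U)$ form a $T$-dual pair, so there is a $T$-duality map at each level of the sequence, and what must be shown is that all the resulting squares commute.

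First I would treat compatibility with the inclusion-induced maps. If $j:U\hookrightarrow B$ is an open inclusion and $j_P:P|_U\hookrightarrow P$, $j_{\hat P}:\hat P|_U\hookrightarrow\hat P$, $j_{\times}:(P\times_B\hat P)|_U\hookrightarrow P\times_B\hat P$ are the induced inclusions, then each of the three building blocks of $T$ is manifestly natural in $U$: the Gysin map $p^{!}$ commutes with $(j_P)_{\ast}$ because the fibre product with $(P\times_B\hat P)|_U$ over $P|_U$ is exactly the preimage under $j_{\times}$ of the fibre product over $P$, and the stable normal bundle and homotopy $\tilde\eta$ restrict accordingly; the changing-twist map $u$ is defined by concatenating $\eta$ with a homotopy $h\circ(\phi\times \mathrm{id})$ whose restriction to $U$ provides exactly the changing-twist homotopy on $U$; and the induced map $\hat p_{\ast}$ is just post-composition with $\hat p$, so it commutes with $(j_{\times})_{\ast}$. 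Putting these together shows $T\circ(j_P)_{\ast}=(j_{\hat P})_{\ast}\circ T_U$, and the analogous identities hold for the disjoint-union and excision isomorphisms used in assembling the Mayer--Vietoris sequence.

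Next I would verify $T\circ\delta=\delta\circ T$. Given a representative $(M,\phi,\upsilon,\eta,E)$ of a class in $K^g_{\ast}(P,\alpha)$ over $(P,\hat P)$ (with $\phi(\partial M)\subset$ the open set one is restricting to), unwind each of the three maps on the boundary datum. For $p^{!}$, the fibre product $\widetilde{\partial M}=\partial M\times_P(P\times_B\hat P)$ coincides with $\partial\tilde M$; the stable normal bundles agree under this identification, and the homotopy $\tilde\eta|_{\partial\tilde M\times[0,1]}$ built by concatenating the $K$-orientation homotopy $\lambda$ with $\eta\circ(f'\times \mathrm{id})$ agrees with the corresponding homotopy constructed from $\eta|_{\partial M\times[0,1]}$, because $\lambda$ itself is the pullback of a global classifying datum on $P\times_B\hat P$. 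For $u$, the changing-twist homotopy $\hat\eta=(h\circ(\phi\times\mathrm{id}))\ast\eta$ restricts to the boundary simply as $(h\circ(\phi|_{\partial M}\times\mathrm{id}))\ast(\eta|_{\partial M\times[0,1]})$, so $u$ commutes with $\delta$. For $\hat p_{\ast}$, the equality is immediate from the definition of the induced map on boundary cycles. Composing these three identities gives the required naturality of $T$ with respect to $\delta$.

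The main obstacle is the boundary check for $p^{!}$: one must show that the homotopy $\tilde\eta$ produced by the wrong-way construction and the homotopy obtained by first restricting to $\partial M$ and then applying $p^{!}$ agree up to a homotopy of homotopies. The cleanest way is to observe that both arise from the same pullback of a classifying map $P\times_B\hat P\to K(\mathbb Z,3)$ associated to the $K$-orientation of $p$, so their restrictions to $\partial\tilde M\times[0,1]$ are canonically homotopic; this uses the representability of $\alpha$ to exhibit an honest vector-bundle model for the relevant twists, just as in Lemma \ref{extension lemma}. Once this homotopy is exhibited, the bordism relation applied to the mapping cylinder of the homotopy produces the required equality of classes in $K^g_{\ast+1}(\hat P|_{U\cap V},\hat\alpha)$, completing the proof.
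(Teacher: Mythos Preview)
Your proposal is correct and follows the same overall strategy as the paper: decompose $T=\hat p_{\ast}\circ u\circ p^{!}$ and verify that each factor commutes with the structure maps of the Mayer--Vietoris sequence. The only substantive difference is the model of the boundary operator you check against. You reduce to the pair-sequence boundary $\delta([M,\phi,\upsilon,\eta,E])=[(\partial M,\phi|_{\partial M},\dots)]$ via the derivation of the Mayer--Vietoris sequence from the six-term sequence and excision; this forces you to work implicitly with relative cycles and to check that the three constituents of $T$ are compatible with the excision isomorphism. The paper instead writes down a direct geometric model of the Mayer--Vietoris boundary: choose a continuous $f:X\to[0,1]$ with $f\equiv 0$ on $U\setminus U\cap V$ and $f\equiv 1$ on $V\setminus U\cap V$, take a regular value $1/2$ of $f\circ\phi$, and set $\delta(M,\phi,\upsilon,\eta,E)=((f\circ\phi)^{-1}(1/2),\phi\circ i,\upsilon\circ i,\eta\circ(i\times\mathrm{id}),i^{\ast}E)$. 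With this description the commutation $p^{!}\circ\delta=\delta\circ p^{!}$ reduces to the set-theoretic identity $\widetilde{(f\circ\phi)^{-1}(1/2)}=(f\circ\phi\circ\tilde\pi)^{-1}(1/2)$, and the homotopy bookkeeping you flag as the ``main obstacle'' is handled in one line. Your route is more formal and yours and the paper's checks for $u$ and $\hat p_{\ast}$ are essentially identical; the paper's explicit $f^{-1}(1/2)$ model just makes the wrong-way compatibility more transparent and avoids the detour through relative groups.
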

\begin{proof}
We first prove the compatibility with the induced map. Assume we have a map $f:X \mapsto Y$ and we have the associated $T$-duality diagrams over $Y$ and pullback it to $X$. $f$ induces maps by $F:P_X\mapsto P_Y$,
$\hat{F}:\hat{P_X} \rightarrow \hat{P_Y}$ and $G: P_X \times_X \hat{P_X} \rightarrow P_Y \times_Y \hat{P_Y}$. Then we have the following identities:
\begin{equation}
\begin{aligned}
\hat{F}_{\ast} \circ T_X&=& \hat{F}_{\ast}\circ (\hat{p}_X)_\ast \circ u_X \circ p_X^! \\
                        &=&(\hat{p}_Y)_{\ast} \circ G_{\ast} \circ u_X \circ p_X^!  \\
                        &=& (\hat{p}_Y)_{\ast} \circ u_Y \circ (G\circ p_Y)^!\circ F_{\ast} \\
                        &=& T_Y \circ F_{\ast}
\end{aligned}
\end{equation}
Now we turn to the compatibility with the boundary map, in the Mayer-Vietoris sequence of the boundary operator$\delta: K^g_{\ast}(X, \alpha)\rightarrow K^g_{\ast+1}(U\cap V, \alpha\circ i_{U\cap V})$ is given as follows:
Choose a continuous map $f: X\rightarrow [0, 1]$ such that $f_{U-U\cap V}$ is $0$ and $f_{V-U\cap V}$ is $1$. Without loss of generality we assume that $f\circ \phi: M\rightarrow [0,1]$ is a smooth function and $1/2$ is a regular point of $f\circ \phi$. For any twisted geometric $K$-cycle $x=(M, \phi, \upsilon, \eta, E)$, define $\delta x= (f^{-1}(1/2), \phi\circ i, \upsilon\circ i, \eta\circ (i\times id), i^{\ast}E)$. By this formula, we get that $\delta$ is compatible with induced map. Also the homotopies $(h\circ (\phi\circ i\times id))\ast (\eta\circ (i\times id))$ and $(\eta\ast (h\circ (\phi\times id))\circ (i\times id)$ are homotopic to each other, which implies that $u \circ\delta = \delta \circ u$. The remainder is to show that $\hat{p}^!\circ \delta = \delta \circ \hat{p}^!$. We write both sides explicitly first:
\par
Given a principal $S^1$-bundle $\pi: P\rightarrow B$ and a twisted geometric cycle $(M, \phi, \upsilon, \eta, E)$ (which we denote by $x$) over $P$
\begin{eqnarray*}
\hat{p}^!\circ \delta x= (\tilde{(f\circ \phi)^{-1}(1/2)}), \phi\circ \tilde{\pi}\circ i, \tilde{\upsilon}\circ i, \eta\circ ((\tilde{\pi}\circ i)\times id), (i\circ \tilde{\pi})^{\ast}E); \\
\delta \circ \hat{p}^!x=((f\circ \phi\circ \tilde{\pi})^{-1}(1/2), \phi\circ \tilde{\pi}\circ i, \tilde{\upsilon}\circ i, \eta\circ ((\tilde{\pi}\circ i)\times id), (i\circ \tilde{\pi})^{\ast} E )
\end{eqnarray*}
Since $\tilde{f\circ \phi}^{-1}(1/2)$ is exactly $(f\circ \phi\circ \tilde{\pi})^{-1}(1/2)$, we get that $\hat{p}^!\circ \delta= \delta\circ \hat{p}^!$. Finally, we have that
\begin{equation*}
T\circ \delta = (\hat{p}^!\circ u\circ p_{\ast})\circ \delta = \delta\circ (\hat{p}^!\circ u\circ p_{\ast})= \delta\circ T
\end{equation*}
\end{proof}
\begin{lemma}
\end{lemma}
\begin{proof}[Proof of Theorem \ref{T duality for twisted K homology}]
 We do the proof by induction on the number of cells. Assume $X$ is a point, then $P$ and $\hat{P}$ are both $S^1$ and the correspondence space is $S^1\times S^1$. Denote $p_{\ast}\circ t^{-1} \circ \hat{p}$ by $T'$. We can see that for any geometric cycle $(M,\iota,\upsilon,\eta,[E])$, the image of this cycle under the map $T' \circ T$ is
$(M\times S^1 \times S^1, \iota\circ \breve{p},\upsilon \circ \breve{p},\breve{\eta},\breve{p}^{\ast}([E]))$.
Here $\breve{p}$ is the projection from $M\times S^1 \times S^1$ to $M$. One can see that this is cobordant to a trivial $S^2$-bundle as spin$^c$-manifolds. The cobordism can be given by $M\times B$, where $B$ is a solid torus with a solid disk cut from it. As all of the bundles involved are trivial, the geometric cycle $(M\times B, \iota \circ \acute{p},\upsilon \circ \acute{p},\acute{\eta},\acute{p}^{\ast}([E]) $ gives the cobordism between the image of $T$ and the spin$^c$-modification i.e $T'\circ T$ is equal to identity in this case. As a consequence we also get that $T$ is an isomorphism. Assume $T$ is an isomorphism when the number of cells is $n$,then we adjoin another cell $\sigma_{n+1}$ to $X$ and we choose open set $U=X\cup \sigma_{n+1}-pt, V= \sigma_{n+1}-\bar{pt}$ and we can get the Mayer-Vietoris sequence of geometric twisted $K$-homology groups. Then the conclusion of this theorem follows from induction and the Five-Lemma.
\end{proof}
\begin{remark}
The construction of $T$-duality transformation of geometric twisted $K$-homology can be easily generalized to $T$-dual pairs of higher dimensional torus bundles.
\end{remark}
We end up with this paper with an interesting question. Can we release the condition in Remark \ref{FW} and construct geometric twisted $K$-theory for general twists?  An idea is to replace twisted spin$^c$-manifolds in the construction of geometric $K$-cycles by noncommutative analogue objects. We leave this for further investigation.

\end{document}